\theoremstyle{plain}
\newtheorem{theorem}{Theorem}[section]
\newtheorem{proposition}[theorem]{Proposition}
\newtheorem{lemma}[theorem]{Lemma}
\theoremstyle{remark}
\newtheorem{remark}[theorem]{Remark}
\begin{document}
\title{Dirac brackets and
reduction of invariant bi-Poisson structures}
\author{Ihor V. Mykytyuk  \\   Andriy Panasyuk}

\maketitle

\begin{abstract}
Let $X$ be a manifold with a bi-Poisson structure $\{\eta^t\}$ generated by a
pair of $G$-invariant symplectic structures $\omega_1$ and
$\omega_2$, where the Lie group
$G$ acts properly on $X$. Let $H$
be some isotropy subgroup for this action representing the
principle orbit type and
$X^r_\mathfrak{h}$ be the submanifold of
$X$ consisting of the points in
$X$ with the stabilizer algebra equal to the Lie algebra
$\mathfrak{h}$ of
$H$ and with the stabilizer group conjugated to $H$ in
$G$. We prove that the pair of symplectic structures
$\omega_1|_{X^r_\mathfrak{h}}$ and
$\omega_2|_{X^r_\mathfrak{h}}$ generates an
$N(H^0)/H^0$-invariant bi-Poisson structure on
$X^r_\mathfrak{h}$, where $N(H^0)$ is the normalizer in
$G$ of the identity component $H^0$ of
$H$. The action of $\widetilde G=N(H^0)/H^0$ on
$X^r_\mathfrak{h}$ is locally free and proper and, moreover, the spaces
$A^G$ of $G$-invariant functions on $X$ and
$A^{\widetilde G}$ of $\widetilde G$-invariant functions on
$X^r_\mathfrak{h}$ can be canonically identified and therefore
the bi-Poisson structure
$\{(\eta^t)'\}$ induced on
$A^G\simeq A^{\widetilde G}$ can be treated as the reduction
with respect to a {\em locally free} action of
a Lie group which essentially simplifies the study of
$\{(\eta^t)'\}$.

\end{abstract}

\section{Introduction}

Two Poisson structures
$\eta_1$ and $\eta_2$ are said to be compatible if the sum
$\eta_1+\eta_2$, or, equivalently, any linear combination
$\eta^t=t_1\eta_1+t_2\eta_2$ is a Poisson structure. The
family
$\{\eta^t\}$ is called a bi-Poisson structure. In this paper
we consider the problem of reduction of a bi-Poisson structure
$\{\eta^t\}$, which is generated by two symplectic
structures,
$G$-invariant with respect to a proper action of a Lie group
$G$ on a connected manifold $X$ to a bi-Poisson structure
$\{\widetilde\eta^t\}$ being
$\widetilde G$-invariant with respect to a proper
{\it locally free} action of some Lie group
$\widetilde G$ on some submanifold
$\widetilde X\subset X$ (Theorem~\ref{th.4}).
We consider a particular case when the spaces
$A^G$ of $G$-invariant functions on $X$ and
$A^{\widetilde G}$ of $\widetilde G$-invariant functions on
$\widetilde X$ can be canonically identified and therefore
the bi-Poisson structure
$\{(\eta^t)'\}$ induced on
$A^G\simeq A^{\widetilde G}$ can be treated as the reduction
with respect to a {\em locally free} action of
a Lie group which essentially simplifies the study of
$\{(\eta^t)'\}$.

Note that, given a symplectic form
$\omega$ on a manifold $X$ and a symplectic submanifold
$\widetilde X \subset X$, the Poisson bracket related to the
Poisson structure $\widetilde\eta=(\omega|_{\widetilde X})^{-1}$
is an example of the so-called Dirac bracket \cite[Sect.~8.5]{MR}.
In general, if two Poisson structures
$\omega_1^{-1}$ and
$\omega_2^{-1}$ are compatible, the Dirac brackets
$(\omega_1|_{\widetilde X})^{-1},(\omega_2|_{\widetilde X})^{-1}$
need not be so (here
$\widetilde X$ is a symplectic submanifold with respect to
both $\omega_1,\omega_2$). In this paper we deal with a very
special situation, when
$(\omega_1|_{\widetilde X})^{-1},(\omega_2|_{\widetilde X})^{-1}$
are compatible, which is a consequence of the
$G$-invariance and the special choice of the submanifold
$\widetilde X$. In more detail this situation can be
described as follows.

Given a proper action of a connected Lie group
$G$ on a connected manifold $X$ and an isotropy subgroup
$H\subset G$ representing the principle orbit type,
consider the subset
$X_{(H)}$ of $X$ consisting of the points in
$X$ with the stabilizer conjugated to $H$ in $G$.
Since the manifold $X$ is connected the subset
$X_{(H)}$ is connected, open, and  dense
in $X$ (\cite{DK}). We show that the subset
$X^r_\mathfrak{h}$ of $X_{(H)}$ consisting of the points in
$X_{(H)}$ with the stabilizer algebra precisely equal to
$\mathfrak{h}$, the Lie algebra of $H$,
is a smooth embedded submanifold of $X$. The subgroup
$N(H^0)\subset G$, the normalizer group in
$G$ of the identity component $H^0$ of $H$, acts on
$X^r_\mathfrak{h}$ and the action of the quotient group $N(H^0)/H^0$ on
$X^r_\mathfrak{h}$ is {\em locally free} and proper.

Let $\omega_1$ and
$\omega_2$ be two $G$-invariant symplectic structures on
$X$ determining a bi-Poisson structure
$\{\eta^t\}$ on $X$. We show that
$X^r_\mathfrak{h}$ is a symplectic submanifold for arbitrary
$G$-invariant symplectic structure on
$X$, in particular, the forms
$\omega_1|_{X^r_\mathfrak{h}}$ and
$\omega_2|_{X^r_\mathfrak{h}}$ are nondegenerate. Then we prove that the pair of symplectic structures
$\omega_1|_{X^r_\mathfrak{h}}$ and
$\omega_2|_{X^r_\mathfrak{h}}$ generates a
$N(H^0)/H^0$-invariant bi-Poisson structure
$\{\widetilde\eta^t\}$ on
$X^r_\mathfrak{h}$ (Theorem~\ref{th.4}). Due to the isomorphism
$X_{(H)}/G=X^r_\mathfrak{h}/(N(H^0)/H^0)$,
the second quotient space is a smooth manifold. As a result the sets
$A^G$ of $G$-invariant functions on $X_{(H)}$ and
$A^{\widetilde G}$ of $\widetilde G$-invariant functions on
$X_{\mathfrak{h}}$, where
$\widetilde G=N(H^0)/H^0$, can be canonically identified.
The bi-Poisson structures
$\{\eta^t\}$ and
$\{\widetilde\eta^t\}$ restricted to the space
$A^G\simeq A^{\widetilde G}$ determine the same bi-Poisson
structure (Theorem~\ref{th.4}).

Since the proper action of the group
$N(H^0)/H^0$ on the manifold $X_\mathfrak{h}$
is locally free, for investigation of the algebraic properties of the bi-Poisson algebra
$A^G\simeq A^{\widetilde G}$
we can use methods developed in the paper~\cite{Pa} for
locally free actions.
Roughly, such an investigation consists of two parts: first is based on hamiltonicity of the action of the corresponding Lie group with respect to a generic Poisson structure of the pencil; second is the study of certain exceptional representatives of the pencil. On the first stage, once the hamiltonicity is established one can use the so-called inertia lemma from the theory of hamiltonian actions \cite[Lemma 2.1]{GS1} (relating the image of the moment map to the stabilizer of the action)  for calculating the rank of the reduction of the generic Poisson structure at a generic point.  Under the assumption  that the action of the corresponding group is locally free this lemma says that the corank of the reduction of the generic Poisson structure is independent on the parameter of the pencil and equal to the index of the Lie algebra of $G$. Without this assumption the situation is much more complicated. This is the reason, why our passage from non locally free to locally free actions is crucial.

We illustrate the theory by a class of examples of reductions of bi-Poisson structures on cotangent
bundles to coadjoint orbits (homogeneous spaces)
$G/K$, where a compact Lie group $G$ acts on $G/K$
and then on the cotangent bundle $T^*(G/K)$ by the lifted action, see
Section \ref{s.3}. Here
$\omega_1$ is the canonical symplectic form
$\Omega$ on the cotangent bundle and
$\omega_2$ is equal to the sum of $\Omega$
and the pull-back of the Kirillov--Kostant--Souriau form. In particular,  we  describe the submanifolds
$X_{(H)}$ and $X^r_\mathfrak{h}$ and the reduced bi-Poisson structure $\{\widetilde\eta^t\}$ on
$X^r_\mathfrak{h}$ (see Proposition \ref{pr.6} and its proof).

These examples first appeared in our paper \cite{MP} (note that the proofs of the results of Section \ref{s.3} are new and independent of that from \cite{MP}), where they served
as a tool in the proof of the complete integrability for
geodesic flows of some metrics. The present paper arose from our attempt of understanding the general principle standing behind the examples mentioned. The results of
this paper are also intended as a tool which can be effectively applied to the study of complete integrability of similar systems, however, such a study lies beyond of the scope of this parer since we hope that the results of the present paper are of interest on their own.

The paper is organized as follows. It is divided to three
sections and Appendix among which Section \ref{s.2} is the principal one. The main result of the paper, Theorem \ref{th.4}, is contained in Subsection \ref{s.2.3}, while Subsections \ref{ss.2.1}--\ref{ss.2.2} are intended as introductory ones. They contain definitions and  general results needed for the formulation and proof of our main result (its crucial ingredients are Lemmas \ref{le.2} and \ref{th.3}). Section \ref{s.3} contains the above mentioned examples. In Appendix we formulate and prove one statement (Lemma \ref{le.8}) from the general theory of Lie groups which is used in the introductory considerations of Subsection \ref{ss.2.2}.

\section{Proper actions of Lie groups and reductions
of invariant bi-Poisson structures}
\label{s.2}

Let $G$ be a connected Lie group acting properly on a smooth
connected manifold $X$. For any point $x\in X$ denote by
$G_x$ its isotropy group. Remark that the group
$G_x$ is compact because the action of $G$ on
$X$ is proper.

Fix some isotropy subgroup $H\subset G$ determining the
{\em principal orbit type}. In this case
the subset
\begin{equation}\label{eq.1}
X_{(H)}=\{x\in X: G_x=gHg^{-1}\ \text{for some}\ g\in G\}
\end{equation}
of $X$, consisting of all orbits $G\cdot x$ in
$X$ isomorphic to $G/H$, is an open and
dense subset of $X$ (see~\cite[\S 2.8 and Th. 2.8.5]{DK}).
The open submanifold $X_{(H)}\subset X$ is
$G$-invariant by definition.
It is well known that the orbit space
$X_{(H)}/G$ is a smooth manifold.
Mainly to fix the notation we shall prove this fact below.

Consider the subset
\begin{equation}\label{eq.2}
X_H=\{x\in X: G_x=H\}
\end{equation}
of $X$ consisting of the points in $X$ with stabilizer precisely equal to $H$.
It is clear that $X_H\subset X_{(H)}$.
The set $X_H$ is a smooth embedded submanifold of
$X$~\cite[Prop. 2.4.7]{OR}.
Let $N(H)$ be the normalizer group of $H$ in
$G$. It is easy to see that the subgroup
$N(H)$ acts on $X_H$ and that every $G$-orbit in
$X_{(H)}$ intersects $X_H$ on an
$N(H)$ orbit. Furthermore, the quotient group
$N(H)/H$ acts freely on
$X_H$ and generates the same orbit space. This action of
$N(H)/H$ is proper because the subgroup
$N(H)\subset G$ is closed. Therefore
$X_H/(N(H)/H)$ is a smooth manifold~\cite[Ch. 3, \S 1.5, Prop. 10]{Bou}
and, consequently, due to orbit isomorphism
\begin{equation}\label{eq.3}
X_{(H)}/G\simeq X_H/(N(H)/H)
\end{equation}
$X_{(H)}/G$ is also a smooth manifold.

\subsection{\tt The submanifold $X^r_\mathfrak{h}$ of
the single orbit type \\ submanifold ${X_{(H)}}$}
\label{ss.2.1}

Let ${\mathfrak{g}}$ be the Lie algebra of the Lie group
$G$.
Let $\mathfrak{h}$ and
$\mathfrak{n}(H)$ be the Lie algebras of the Lie groups
$H$ and $N(H)$ respectively. The algebra
$\mathfrak{n}(H)$ is a subalgebra of the normalizer
$\mathfrak{n}(\mathfrak{h})$ of the algebra
$\mathfrak{h}$ in ${\mathfrak{g}}$ and coincides with
$\mathfrak{n}(\mathfrak{h})$ if the Lie group
$H$ is connected. In general
$\mathfrak{n}(H)\ne \mathfrak{n}(\mathfrak{h})$.
Since the Lie subgroup
$\operatorname{Ad} (H)$ of
$\operatorname{Ad}(G)$ is compact, there is an
$\operatorname{Ad}(H)$-invariant scalar product
$\langle\cdot ,\cdot \rangle^H$ on the Lie algebra
${\mathfrak{g}}$. Denote by
$\mathfrak{p}$ the orthogonal complement to
$\mathfrak{n}(\mathfrak{h})$ in
${\mathfrak{g}}$ with respect to
$\langle\cdot ,\cdot \rangle^H$.
Due to the connectedness of the group
$H^0$ with the Lie algebra $\mathfrak{h}$ we have
\begin{equation}\label{eq.4}
{\mathfrak{g}}=\mathfrak{p}\oplus\mathfrak{n}(\mathfrak{h}),
\quad
\operatorname{Ad}(H^0)(\mathfrak{n}(\mathfrak{h}))
=\mathfrak{n}(\mathfrak{h}),
\quad
\operatorname{Ad}(H^0)(\mathfrak{p})=\mathfrak{p}.
\end{equation}

As we remarked above in general
$\mathfrak{n}(H)\ne \mathfrak{n}(\mathfrak{h})$.
Therefore it is more useful from the point of view of
calculations (see example in Section~\ref{s.3}) to consider
also the subset of $X$
\begin{equation}\label{eq.5}
X_\mathfrak{h}=\{x\in X: {\mathfrak{g}}_x=\mathfrak{h}\}=
\{x\in X: G_x^0=H^0\},
\end{equation}
where ${\mathfrak{g}}_x$ stands for the Lie algebra of the
isotropy group
$G_x$ and $A^0$ for the connected component of the identity
element (the identity component for short) of a Lie group
$A\subset G$. In general,
$X_\mathfrak{h}\not\subset X_{(H)}$. Therefore
we will consider the subset
\begin{equation}\label{eq.6}
X^r_\mathfrak{h}=X_\mathfrak{h}\cap X_{(H)}
\end{equation}
of $X_{(H)}$ (of ``regular'' points).

Clearly, $X^r_\mathfrak{h}$ contains
$X_H$. We will prove below that
$X^r_\mathfrak{h}$ is an embedded submanifold of
$X_{(H)}$ and
$\dim X^r_\mathfrak{h}-\dim
X_H=\dim\mathfrak{n}(\mathfrak{h})-\dim\mathfrak{n}(H)$.

The normalizer $N(H^0)$ of $H^0$ in $G$ coincides with the
normalizer
$$
N(\mathfrak{h})=\{g\in G:
\operatorname{Ad}(g)(\mathfrak{h})=\mathfrak{h}\}
$$
of $\mathfrak{h}$ in $G$. The Lie algebra $\mathfrak{n}(\mathfrak{h})$
is the Lie algebra of $N(H^0)=N(\mathfrak{h})$.

\begin{lemma}\label{le.1}
The set $X^r_\mathfrak{h}$ is embedded submanifold of
the manifold $X_{(H)}$ and for any $x\in X^r_\mathfrak{h}$
the tangent space $T_x X^r_\mathfrak{h}$ is given by
\begin{equation}\label{eq.7}
T_x X^r_\mathfrak{h}=\{v\in T_x X_{(H)}:
h_{*}(x)(v)=v, \forall h\in H^0\}.
\end{equation}
The quotient group $N(H^0)/H^0$ acts locally freely on the set
$X^r_\mathfrak{h}$ and
\begin{equation}\label{eq.8}
X^r_\mathfrak{h}/(N(H^0)/H^0)\simeq X_{(H)}/G
\end{equation}
is a smooth manifold.
\end{lemma}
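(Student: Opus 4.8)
The plan is to realize $X^r_\mathfrak{h}$ as an open piece of the fixed-point set of the compact group $H^0$, and then to read off all four assertions, the last one from the local normal form supplied by the slice theorem. First I would prove the equality
\[
X^r_\mathfrak{h} = X^{H^0}\cap X_{(H)}, \qquad X^{H^0}:=\{x\in X: h\cdot x=x\ \ \forall h\in H^0\}.
\]
The inclusion $\subseteq$ is clear, since $x\in X_\mathfrak{h}$ forces $H^0=G_x^0\subseteq G_x$; conversely, if $x\in X^{H^0}\cap X_{(H)}$ then $\mathfrak{h}\subseteq\mathfrak{g}_x$ while $\dim\mathfrak{g}_x=\dim\mathfrak{h}$ (the stabilizer being conjugate to $H$), whence $\mathfrak{g}_x=\mathfrak{h}$. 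Because the isotropy groups of a proper action are compact, so is $H$ and hence its identity component $H^0$, and the fixed-point set $X^{H^0}$ of a compact group action is a closed embedded submanifold of $X$ with $T_x X^{H^0}=(T_xX)^{H^0}$. As $X_{(H)}$ is open in $X$, the set $X^r_\mathfrak{h}=X^{H^0}\cap X_{(H)}$ is open in $X^{H^0}$, hence an embedded submanifold of $X_{(H)}$, and $T_xX^r_\mathfrak{h}=(T_xX)^{H^0}$, which is precisely \eqref{eq.7}.

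Next, for the action, if $g\in N(H^0)=N(\mathfrak{h})$ then $\mathfrak{g}_{g\cdot x}=\operatorname{Ad}(g)\mathfrak{g}_x=\operatorname{Ad}(g)\mathfrak{h}=\mathfrak{h}$ and $g\cdot x\in X_{(H)}$ by $G$-invariance, so $N(H^0)$ preserves $X^r_\mathfrak{h}$; since every $x\in X^r_\mathfrak{h}$ satisfies $H^0\subseteq G_x$, the subgroup $H^0$ acts trivially and the quotient $\widetilde G=N(H^0)/H^0$ acts. To see local freeness I would compute the Lie algebra of the stabilizer of $x$ in $N(H^0)$, namely $\mathfrak{n}(\mathfrak{h})\cap\mathfrak{g}_x=\mathfrak{n}(\mathfrak{h})\cap\mathfrak{h}=\mathfrak{h}=\operatorname{Lie}(H^0)$; passing to $\widetilde G$ annihilates it, so the $\widetilde G$-stabilizers are discrete. (Properness is inherited from $G$ since $N(H^0)$ is closed.)

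For the orbit spaces I would first establish the set-theoretic bijection $X^r_\mathfrak{h}/\widetilde G\simeq X_{(H)}/G$ induced by the inclusion $X^r_\mathfrak{h}\hookrightarrow X_{(H)}$. Surjectivity is immediate because every $G$-orbit in $X_{(H)}$ meets $X_H\subseteq X^r_\mathfrak{h}$. For injectivity, if $y=g\cdot x$ with $x,y\in X^r_\mathfrak{h}$, then $\operatorname{Ad}(g)\mathfrak{h}=\operatorname{Ad}(g)\mathfrak{g}_x=\mathfrak{g}_y=\mathfrak{h}$, so $g\in N(\mathfrak{h})=N(H^0)$ and $x,y$ lie in a single $\widetilde G$-orbit.

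Finally, to upgrade this bijection to a diffeomorphism onto the smooth manifold $X_{(H)}/G$, I would pass to the slice theorem around a point $x_0\in X_H$. As $H$ represents the principal orbit type, its slice representation on the slice $S$ is trivial, so a $G$-invariant tube is $G$-equivariantly $(G/H)\times S$ with $G_{([g],s)}=gHg^{-1}$ independent of $s$. In this model $X^r_\mathfrak{h}$ becomes $(N(H^0)/H)\times S$, on which $\widetilde G$ acts by left translation on the homogeneous factor $N(H^0)/H$ and trivially on $S$; the quotient is therefore canonically $S$, matching the local model $S$ of $X_{(H)}/G$. Here lies the only delicate point, and the main obstacle of the proof: although $\widetilde G$ acts merely locally freely, with finite stabilizers isomorphic to the component group $H/H^0$, the quotient is a genuine manifold rather than an orbifold, precisely because locally the action is left translation on the \emph{homogeneous space} $N(H^0)/H$, whose orbit space is a single point. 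Patching these local identifications then yields the diffeomorphism \eqref{eq.8}.
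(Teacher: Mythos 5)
Your proof is correct, but it reaches the two halves of the lemma by a genuinely different route than the paper. For the submanifold structure and the tangent-space formula \eqref{eq.7}, the paper works inside the $G$-equivariant tube $\phi\colon G/G_x\times W\to O(x)$, identifies $O(x)\cap X^r_\mathfrak{h}$ with $N(H^0)/G_x\times W$, and then computes $T_{(o,0)}$ using the characterization \eqref{eq.25} of $\mathfrak{n}(\mathfrak{h})/\mathfrak{h}$ as the $\operatorname{Ad}(H^0)$-fixed vectors in $\mathfrak{g}/\mathfrak{h}$ (proved in the Appendix). You instead observe the identity $X^r_\mathfrak{h}=X^{H^0}\cap X_{(H)}$ --- which is correct, by exactly the dimension argument you give --- and invoke the classical theorem that the fixed-point set of a compact group action is an embedded submanifold with $T_xX^{H^0}=(T_xX)^{H^0}$; this yields the submanifold property and \eqref{eq.7} in one stroke, is more elementary and self-contained, and bypasses the appendix computation entirely (at the cost of not producing the explicit local model $N(H^0)/G_x\times W$, which the paper reuses later in Lemma~\ref{th.3} to build the adapted coordinates and the bundle $\mathcal{P}$). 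For the quotient, the paper gets smoothness of $X^r_\mathfrak{h}/(N(H^0)/H^0)$ for free from the isomorphism \eqref{eq.3}, i.e.\ from the \emph{free} proper action of $N(H)/H$ on $X_H$ (Bourbaki), whereas you rederive it directly from the slice model $(G/H)\times S$, in which $X^r_\mathfrak{h}$ becomes $(N(H^0)/H)\times S$ with $\widetilde G$ acting transitively on the homogeneous fiber; this is more work but has the merit of explaining concretely why a merely locally free action (finite stabilizers $\simeq H/H^0$) still produces a manifold rather than an orbifold. Two small points you gloss over, both standard: the triviality of the slice representation at $x_0\in X_H$ needs the observation that a subgroup of the compact group $H$ conjugate in $G$ to $H$ must equal $H$ (the open-subgroup/finitely-many-components argument the paper uses elsewhere, or simply the citation of the single-orbit-type normal form in Ortega--Ratiu that the paper relies on); and the final ``patching'' should note that transitions between slice charts of the orbit space are induced by the group action, hence smooth. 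Neither is a gap of substance.
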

\begin{proof}
To prove the lemma we will use the method
from~\cite[Ch.2,\S 2.3,\S 2.4]{OR}. Due to the fact that
$X_{(H)}$ is a single orbit type manifold the local
description of the
$G$-action on this connected manifold is very simple. For
the point $x\in X_{(H)}$ there is a
$G$-invariant open neighborhood $O(x)$ in
$X_{(H)}$ and a $G$-equivariant diffeomorphism
$\phi: G/G_x\times W\to O(x)$, where
$G$ acts naturally on $G/G_x$ and trivially on
$W$~\cite[Th.2.3.28]{OR}. Here the cross-section
$W$ is an open ball around $0$ in some real linear space
${\mathbb R}^k$ (of dimension
$k=\dim X-\dim(G/G_x)$) and $\phi(o,0)=x$, where
$o=G_x\in G/G_x$.

Since $x\in X^r_\mathfrak{h}\subset X_{(H)}$ we have
$G_x^0=H^0$. Under the above mentioned
$\phi$-identification of the open neighborhood
$O(x)$, $x\in X^r_\mathfrak{h}$, with $G/G_x\times W$ the subset
$O_\mathfrak{h}(x)=O(x)\cap X^r_\mathfrak{h}$ is $\phi$-isomorphic to
$N(H^0)/G_x\times W\subset G/G_x\times W$, where
$N(H^0)/G_x$ is considered as a closed embedded submanifold
of $G/G_x$~\cite[Prop.2.4.6]{OR}. From this local description
it follows that
$X^r_\mathfrak{h}$ is a (locally closed) embedded submanifold of
$X_{(H)}$. The submanifold $O_\mathfrak{h}(x)$ is
$N(H^0)$-invariant.

Let us prove relation~(\ref{eq.7}).
The group $H^0$ acts on
$W$ trivially and the tangent action of
$H^0$ on the tangent space
${\mathfrak{g}}/\mathfrak{h}=T_o(G/G_x)$ is induced by the
$\operatorname{Ad}(H^0)$-action on
${\mathfrak{g}}$. Let $\xi\in{\mathfrak{g}}$.
By formula~(\ref{eq.25}) from Appendix
$\operatorname{Ad}(h)(\xi+\mathfrak{h})=\xi+\mathfrak{h}$
for all $h\in H^0$ if and only if
$\xi\in\mathfrak{n}(\mathfrak{h})$. Taking into account that
$\mathfrak{n}(\mathfrak{h})/\mathfrak{h}=T_o(N(H^0)/G_x)$,
we obtain that
$$
T_{(o,0)}(N(H^0)/G_x\times W)=\{v\in T_{(o,0)}(G/G_x\times W):
h_{*}(o,0)(v)=v, \forall h\in H^0\}.
$$
Since the diffeomorphism $\phi$ is $G$--equivariant,
we get~(\ref{eq.7}).

Since for each $x\in X^r_\mathfrak{h}\subset X_{(H)}$
its isotropy group $G_x$
is conjugated to $H$ in $G$, it is easy to check that
\begin{list}{}{\listparindent 0pt \itemsep 0pt
\parsep  0pt \topsep 0pt}
\item[1)]
the subgroup $N(H^0)$ acts on $X^r_\mathfrak{h}$ and
$H^0\subset N(H^0)$ acts trivially on $X^r_\mathfrak{h}$;
\item[2)]
every $G$-orbit in $X_{(H)}$ intersects
$X^r_\mathfrak{h}$ by an $N(H^0)$-orbit;
\item[3)]
$N(H^0)\cdot X_H= X^r_\mathfrak{h}$ (if $G_x^0=H^0$
and $gG_xg^{-1}=H$, then $g\in N(H^0)$).
\end{list}
The quotient group $N(H^0)/H^0$ acts {\it locally freely} on
$X^r_\mathfrak{h}$ (with finite isotropy group $G_x/H^0\simeq H/H_0$
at $x\in X^r_\mathfrak{h}$) and generates the same orbit space as
$G$ on $X_{(H)}$. This action of
$N(H^0)/H^0$ is proper because the subgroup
$N(H^0)\subset G$ is closed. Since by relation~(\ref{eq.3})
$X_{(H)}/G$ is a smooth manifold, the quotient space
$X^r_\mathfrak{h}/(N(H^0)/H^0)\simeq X_{(H)}/G$
is also a smooth manifold.
\qed
\end{proof}

\subsection{\tt The local structure of the single orbit type \\
submanifold $\mathbf{X_{(H)}}$ near $\mathbf{X^r_\mathfrak{h}}$}
\label{ss.2.2}

 The action of $G$ defines a linear map
$\xi\mapsto \xi_X$, where
$\xi_X$ denotes the vector field on
$X$ generated by one-parameter subgroup
$\exp t\xi\subset G$. For any subspace
$\mathfrak{a}\subset{\mathfrak{g}}$ and point $x\in X$ put
$\mathfrak{a}(x){\stackrel{\mathrm{def}}{=}}\{\xi_X(x):
\xi\in\mathfrak{a}\}$.

In this subsection we describe a canonical complementary
subbundle ${\mathcal{P}}$ to $ TX^r_\mathfrak{h}$
in $TX_{(H)}|_{X^r_\mathfrak{h}}$ for which the splitting
$TX_{(H)}|_{X^r_\mathfrak{h}}={\mathcal{P}}\oplus TX^r_\mathfrak{h}$
is orthogonal with respect to arbitrary $G$-invariant nondegenerate
form on $X$. The existence of a such canonical subbundle ${\mathcal{P}}$
determines a local structure of $X_{(H)}$
near $X^r_\mathfrak{h}$.

Choose some point $x\in X^r_\mathfrak{h}\subset X_{(H)}$.
Due to the compactness of the Lie subgroup
$\operatorname{Ad} (G_x)$ of
$\operatorname{Ad}(G)$ there exists an
$\operatorname{Ad}(G_x)$-invariant scalar product
$\langle\cdot ,\cdot \rangle^{G_x}$ on the Lie algebra
${\mathfrak{g}}$. Denote by
$\mathfrak{p}^x$ the orthogonal complement to
$\mathfrak{n}(\mathfrak{h})$ in
${\mathfrak{g}}$ with respect to
$\langle\cdot ,\cdot \rangle^{G_x}$. Since
$H^0=G_x^0$, we have
\begin{equation}\label{eq.9}
{\mathfrak{g}}=\mathfrak{p}^x\oplus\mathfrak{n}(\mathfrak{h}),
\quad
\operatorname{Ad}(H^0)(\mathfrak{n}
(\mathfrak{h}))=\mathfrak{n}(\mathfrak{h}),
\quad
\operatorname{Ad}(H^0)(\mathfrak{p}^x)=\mathfrak{p}^x.
\end{equation}
Identifying the tangent space to the homogeneous space
$G/G_x$ at $o=G_x\in G/G_x$ with the orthogonal complement
$\mathfrak{h}^{\bot x}\subset{\mathfrak{g}}$ to
$\mathfrak{h}$ in ${\mathfrak{g}}$ with respect to
$\langle\cdot ,\cdot \rangle^{G_x}$, we obtain that
$\mathfrak{p}^x\subset\mathfrak{h}^{\bot x}$
is a complementary subspace to the tangent space
$T_o(N(H^0)/G_x)$ in $T_o(G/G_x)$. Using our
$G$-equivariant identification
$\phi: G/G_x\times W\to O(x)$,
$\phi(o,0)=x$, we conclude that the space
$\mathfrak{p}^x(x){\stackrel{\mathrm{def}}{=}}\{\xi_X(x),\
\xi\in\mathfrak{p}^x\}$
is a complementary subspace to
$T_x X^r_\mathfrak{h}$ in $T_x X_{(H)}$.

The $\operatorname{Ad}(G_x)$-invariant scalar product
$\langle\cdot ,\cdot \rangle^{G_x}$ on
$T_o(G/G_x)=\mathfrak{h}^{\bot x}$ and any scalar product on
$T_0 W$ determine an
$G_x$-invariant scalar product on the tangent space
$T_o(G/G_x)\oplus T_0 W$. Now using the
$G$-equivariant diffeomorphism $\phi$ we get the
$G_x$-invariant scalar product
$\langle\cdot ,\cdot \rangle_x$ on the space
$T_x X_{(H)}$ at $x=\phi(o,0)$ such that
$\langle \mathfrak{p}^x(x) , T_x X^r_\mathfrak{h}\rangle_x=0$.
In general, $\mathfrak{p}^x\ne\mathfrak{p}$ (the subspace
$\mathfrak{p}$ was defined in Subsection \ref{ss.2.1}) but
\begin{equation}\label{eq.10}
\mathfrak{p}^x\oplus\mathfrak{h}=\mathfrak{p}\oplus\mathfrak{h}.
\end{equation}
The proof of this identity is given  in
Appendix (see Lemma~\ref{le.8}). Now, taking into account that
$\mathfrak{h}$ is the isotropy algebra of the point
$x\in X^r_\mathfrak{h}$, i.e. $\mathfrak{h}(x)=0$, we obtain that
$\mathfrak{p}^x(x)=\mathfrak{p}(x)$. Thus the space
\begin{equation}\label{eq.11}
{\mathcal{P}}(x){\stackrel{\mathrm{def}}{=}}\mathfrak{p}(x)
=\{\xi_X(x),\ \xi\in\mathfrak{p}\},\quad x\in X^r_\mathfrak{h},
\end{equation}
is the orthogonal complement to the tangent space
$T_x X^r_\mathfrak{h}$ in $T_x X_{(H)}$:
\begin{equation}\label{eq.12}
T_xX_{(H)}={\mathcal{P}}(x)\oplus T_xX^r_\mathfrak{h},\quad
\langle {\mathcal{P}}(x) , T_x X^r_\mathfrak{h}\rangle_x=0,\quad
x\in X^r_\mathfrak{h}.
\end{equation}
We will show below that the space ${\mathcal{P}}(x)$ is independent
of the choice of the $G_x$-invariant scalar
products $\langle\cdot ,\cdot\rangle^{G_x}$ on ${\mathfrak{g}}$
and $\langle\cdot ,\cdot\rangle_x$ on $T_x X_{(H)}$.

Since $h_{*}(\xi_X)=(\operatorname{Ad}(h)(\xi))_X$ for any vector
$\xi\in{\mathfrak{g}}$ and
$\operatorname{Ad}(H^0)(\mathfrak{p}) =\mathfrak{p}$,
the space ${\mathcal{P}}(x)$, $x\in X^r_\mathfrak{h}$ is
$H^0$-invariant. It is evident that the union
${\mathcal{P}}=\bigcup_{x\in X^r_\mathfrak{h}}{\mathcal{P}}(x)$
is a trivial vector bundle over
$X^r_\mathfrak{h}$ and
$TX_{(H)}|_{X^r_\mathfrak{h}}={\mathcal{P}}\oplus TX^r_\mathfrak{h}$.
The vector fields $\xi_X|_{X^r_\mathfrak{h}}$,
$\xi\in\mathfrak{p}$, are global sections of
${\mathcal{P}}$.

\begin{lemma}\label{le.2}
Let $\alpha(x)$ be an
$G_x$-invariant nondegenerate bi-linear form on the space
$T_x X_{(H)}$, $x\in X^r_\mathfrak{h}$. Then
$\alpha(x)({\mathcal{P}}(x),T_x X^r_\mathfrak{h})=0$, i.e.
${\mathcal{P}}(x)$ is the orthogonal complement to the space
$T_x X^r_\mathfrak{h}$ in $T_x X_{(H)}$ with respect to the form
$\alpha(x)$ and the restrictions $\alpha(x)|_{{\mathcal{P}}(x)}$,
$\alpha(x)|_{T_x X^r_\mathfrak{h}}$ are nondegenerate.
\end{lemma}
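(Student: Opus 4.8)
The plan is to exploit that $\alpha(x)$ is invariant under the \emph{compact} group $H^0=G_x^0$ (it is a closed subgroup of the compact group $G_x$, compact by properness) together with the fact that the two summands of the splitting $T_xX_{(H)}={\mathcal P}(x)\oplus T_xX^r_\mathfrak{h}$ are distinguished by the $H^0$-action. Concretely, by~(\ref{eq.7}) the subspace $T_xX^r_\mathfrak{h}$ is \emph{exactly} the fixed-point subspace of the linear $H^0$-action $h\mapsto h_{*}(x)$ on $T_xX_{(H)}$, whereas ${\mathcal P}(x)=\mathfrak{p}(x)$, being $H^0$-invariant (this uses $h_{*}(\xi_X)=(\operatorname{Ad}(h)(\xi))_X$ and $\operatorname{Ad}(H^0)(\mathfrak{p})=\mathfrak{p}$) and complementary to $T_xX^r_\mathfrak{h}$ by~(\ref{eq.12}), contains \emph{no} nonzero $H^0$-fixed vector: a fixed vector of ${\mathcal P}(x)$ would lie in ${\mathcal P}(x)\cap T_xX^r_\mathfrak{h}=0$. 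Since $H^0$ is compact I can average against normalized Haar measure; I would introduce the projector $P\colon T_xX_{(H)}\to T_xX_{(H)}$, $Pv=\int_{H^0}h_{*}(x)(v)\,dh$, which by invariance of Haar measure maps onto and restricts to the identity on the fixed-point subspace $T_xX^r_\mathfrak{h}$.

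Next I would establish the orthogonality $\alpha(x)({\mathcal P}(x),T_xX^r_\mathfrak{h})=0$ by this averaging. Fix $u\in{\mathcal P}(x)$ and $w\in T_xX^r_\mathfrak{h}$. Since $\alpha(x)$ is $H^0$-invariant and $h_{*}(x)(w)=w$, for every $h\in H^0$ one has $\alpha(x)(h_{*}(x)(u),w)=\alpha(x)(h_{*}(x)(u),h_{*}(x)(w))=\alpha(x)(u,w)$; integrating over $H^0$ gives $\alpha(x)(u,w)=\alpha(x)(Pu,w)$. But $Pu\in{\mathcal P}(x)$ (by $H^0$-invariance of ${\mathcal P}(x)$) is $H^0$-fixed, so $Pu\in{\mathcal P}(x)\cap T_xX^r_\mathfrak{h}=0$ and hence $\alpha(x)(u,w)=0$. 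Running the identical computation with the two arguments interchanged yields also $\alpha(x)(w,u)=0$. Thus the decomposition $T_xX_{(H)}={\mathcal P}(x)\oplus T_xX^r_\mathfrak{h}$ is orthogonal for $\alpha(x)$ in \emph{both} orders of the arguments, i.e. $\alpha(x)$ is block-diagonal with respect to it.

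For the nondegeneracy of the restrictions I would argue by pure linear algebra. Suppose $v\in{\mathcal P}(x)$ satisfies $\alpha(x)(v,{\mathcal P}(x))=0$; combined with $\alpha(x)(v,T_xX^r_\mathfrak{h})=0$ from the previous step this gives $\alpha(x)(v,T_xX_{(H)})=0$, whence $v=0$ by nondegeneracy of $\alpha(x)$. The symmetric statement with $v$ in the right slot, together with the two analogous statements for $T_xX^r_\mathfrak{h}$ obtained from the second orthogonality identity, show that both left and right kernels of $\alpha(x)|_{{\mathcal P}(x)}$ and of $\alpha(x)|_{T_xX^r_\mathfrak{h}}$ vanish, so both restrictions are nondegenerate.

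The step I expect to be the main (if mild) obstacle is the bookkeeping forced by the fact that $\alpha(x)$ is assumed only bilinear, not symmetric (in the applications it is skew-symmetric): one must secure orthogonality in \emph{both} argument orders, because the block-diagonalization that yields nondegeneracy of the two restrictions relies on both identities rather than on a single one. Everything else reduces to the standard compact-group averaging against Haar measure combined with the characterization~(\ref{eq.7}) of $T_xX^r_\mathfrak{h}$ as the $H^0$-fixed subspace and the invariance of ${\mathcal P}(x)$.
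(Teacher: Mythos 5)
Your proof is correct, but it takes a genuinely different route from the paper's. The paper follows the method of Lemma 27.1 in \cite{GS}: it uses the auxiliary $G_x$-invariant scalar product $\langle\cdot,\cdot\rangle_x$ of (\ref{eq.12}) to encode $\alpha(x)$ by a single intertwining operator $J$ with $\alpha(x)(u,v)=\langle u,Jv\rangle_x$; since $J$ commutes with the $H^0$-action, it preserves the fixed-point subspace $T_xX^r_\mathfrak{h}$ characterized by (\ref{eq.7}), and the metric orthogonality (\ref{eq.12}) then transfers to $\alpha(x)$. You instead average directly against the Haar measure of the compact group $H^0$, using that ${\mathcal{P}}(x)$ is $H^0$-invariant and meets the fixed subspace trivially, so the averaged vector $Pu$ dies in ${\mathcal{P}}(x)\cap T_xX^r_\mathfrak{h}=0$. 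Both arguments rest on the same structural facts --- compactness of $H^0=G_x^0$, the characterization (\ref{eq.7}), the $H^0$-invariance of ${\mathcal{P}}(x)$, and the splitting --- but your averaging dispenses with the auxiliary scalar product and the operator $J$ altogether, needing only the direct-sum decomposition and not its metric orthogonality, and it delivers the orthogonality in \emph{both} argument orders explicitly; the paper proves only $\alpha(x)({\mathcal{P}}(x),T_xX^r_\mathfrak{h})=0$ and leaves the transposed identity (invoked later as $B=C=0$ in the proof of Lemma~\ref{th.3}) implicit, where it follows either by applying the same $J$-argument to the transposed form or by noting that ${\mathcal{P}}(x)$, having no nonzero $H^0$-fixed vectors, is the sum of the nontrivial $H^0$-isotypic components and hence also $J$-invariant. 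One small remark: your worry that nondegeneracy of the restrictions requires both orders is slightly overcautious --- one-sided orthogonality already makes the matrix of $\alpha(x)$ block-triangular in an adapted basis, so the diagonal blocks are forced to be nondegenerate by nondegeneracy of $\alpha(x)$ alone (exactly your left/right-kernel count, applied once to each subspace); the two-sided identity is needed only for the full block-diagonal conclusion.
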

\begin{proof}
To prove that $\alpha(x)({\mathcal{P}}(x),T_x X^r_\mathfrak{h})=0$
we will use the method of the proof of Lemma 27.1
in~\cite{GS}. We have shown that there exists a
$G_x$-invariant scalar product
$\langle\cdot,\cdot\rangle_x$ on the space
$T_x X_{(H)}={\mathcal{P}}(x)\oplus T_x X^r_\mathfrak{h}$ such that
formula~(\ref{eq.12}) holds. The form
$\alpha(x)$ is
$G_x$-invariant with respect to the tangent action
$h_{*}(x): T_x X_{(H)}\to T_x X_{(H)}$ of the group
$G_x$. Thus there exists a unique nondegenerate linear map
$J:T_x X_{(H)}\to T_x X_{(H)}$ such that
$\alpha(x)(u,v)=\langle u,Jv \rangle_x$ for all
$u,v\in T_x X_{(H)}$ and
$J\cdot h_{*}(x)=h_{*}(x)\cdot J$ for all
$h\in G_x\supset H^0$. By~(\ref{eq.7}) the subspace
$T_x X^r_\mathfrak{h}\subset T_x X_{(H)}$ is the set of all
$H^0$-fixed vectors in
$T_x X_{(H)}$. Now we get the inclusion
$J(T_x X^r_\mathfrak{h})\subset T_x X^r_\mathfrak{h}$
due to the fact that $J$ commutes with the $H^0$-action on
$T_x X_{(H)}$. Thus
$\alpha(x)({\mathcal{P}}(x),T_x X^r_\mathfrak{h})=\langle
{\mathcal{P}}(x),JT_x X^r_\mathfrak{h} \rangle_x=0$
by~(\ref{eq.12}). Since
$T_x X_{(H)}={\mathcal{P}}(x)\oplus T_x X^r_\mathfrak{h}$
and the form
$\alpha(x)$ is nondegenerate, we obtain the last assertion
of the lemma. \qed
\end{proof}

The following lemma asserts the existence
of local coordinate systems in $X_{(H)}$
near the submanifold $X^r_\mathfrak{h}$
consistent with any $G$-invariant
nondegenerate bi-linear form on $X$.

\begin{lemma}\label{th.3}
For each point $x\in X^r_\mathfrak{h}$
there exists an open subset $U(x)\subset X_{(H)}$
and a coordinate
system $(U(x), y_1,\ldots,y_p,y_{p+1},\ldots,y_{m})$ in
$X_{(H)}$ around the point $x$ such that
\begin{list}{}{\listparindent 0pt \itemsep 0pt
\parsep  0pt \topsep 0pt}
\item{$1)$}
all coordinates of the point
$x$ vanish: $y_1(x)=\ldots=y_m(x)=0$;
\item{$2)$}
the subset
$U_\mathfrak{h}(x)=U(x)\cap X^r_\mathfrak{h}$ of $U(x)$ is the set
$\{z\in U(x): y_1(z)=\ldots=y_p(z)=0\}$;
\item{$3)$}
the vectors
${\partial}/{\partial y_i}$, $i=1,\ldots,p$, and the vectors
${\partial}/{\partial y_j}$, $j=p+1,\ldots,m$, at a point
$z\in U_\mathfrak{h}(x)$ span the spaces ${\mathcal{P}}(z)$ and
$T_z X^r_\mathfrak{h}$ respectively.
\item{$4)$}
any $G$-invariant nondegenerate bi-linear form
$\alpha$ on $X$ at a point $z\in U(x)$
in the corresponding basis $\{{\partial}/{\partial y_1},\ldots,{\partial}/{\partial y_m}\}$, has the matrix \linebreak {\small $\left(
\begin{array}{cc}
 A(y(z))  & B(y(z)) \\
C(y(z))   & D(y(z))
\end{array}\right)$}
such that $B(y(z))=C(y(z))=0$ and the matrices $A(y(z))$,
$D(y(z))$ are nondegenerate for
$z\in U_\mathfrak{h}(x)$.
\end{list}
\end{lemma}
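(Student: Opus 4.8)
The plan is to construct the chart by flowing the submanifold $X^r_\mathfrak{h}$ transversally off itself along the canonical sections of the bundle $\mathcal{P}$. Fix a basis $\xi_1,\dots,\xi_p$ of $\mathfrak{p}$ (so that $p=\dim\mathfrak{p}$); by Subsection \ref{ss.2.2} the fundamental fields $(\xi_i)_X|_{X^r_\mathfrak{h}}$ are global sections of $\mathcal{P}$ and, since $\mathfrak{p}\cap\mathfrak{g}_z=\mathfrak{p}\cap\mathfrak{h}=0$, the vectors $(\xi_i)_X(z)$ form a basis of $\mathcal{P}(z)=\mathfrak{p}(z)$ for every $z\in X^r_\mathfrak{h}$. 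Choose a chart $(y_{p+1},\dots,y_m)$ on $X^r_\mathfrak{h}$ centered at $x$, with domain $U_\mathfrak{h}(x)$, and set
\[
\Psi(t_1,\dots,t_p,z)=\exp\Bigl(\textstyle\sum_{i=1}^p t_i\xi_i\Bigr)\cdot z,\qquad z\in U_\mathfrak{h}(x),
\]
a map into $X_{(H)}$ because $X_{(H)}$ is $G$-invariant. At $(0,x)$ the $t$-derivatives of $\Psi$ give $(\xi_i)_X(x)\in\mathcal{P}(x)$ and the $z$-derivatives give $T_xX^r_\mathfrak{h}$; since $T_xX_{(H)}=\mathcal{P}(x)\oplus T_xX^r_\mathfrak{h}$ by (\ref{eq.12}), the differential of $\Psi$ at $(0,x)$ is an isomorphism, and the inverse function theorem yields an open neighborhood $U(x)\ni x$ on which $\Psi$ is a diffeomorphism. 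Declaring $y_i=t_i$ for $i\le p$ and transporting the $y_{p+j}$ through $\Psi$ produces the desired coordinates: property 1) holds because $\Psi(0,x)=x$ and the $y_{p+j}$ are centered at $x$, and property 3) holds because at a point $z\in\{t=0\}=U_\mathfrak{h}(x)$ one has $\partial/\partial y_i|_z=(\xi_i)_X(z)$ spanning $\mathcal{P}(z)$ and $\partial/\partial y_{p+j}|_z$ spanning $T_zX^r_\mathfrak{h}$ (as $\Psi(0,\cdot)=\mathrm{id}$ on $U_\mathfrak{h}(x)$).

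For property 2) I would prove $U(x)\cap X^r_\mathfrak{h}=\{y_1=\dots=y_p=0\}$. The inclusion $\supset$ is immediate from $\Psi(0,z)=z\in X^r_\mathfrak{h}$. Conversely, suppose $\Psi(t,z)\in X^r_\mathfrak{h}$; putting $g=\exp(\sum_i t_i\xi_i)$ and using $\mathfrak{g}_z=\mathfrak{h}$ we get $\mathfrak{g}_{g\cdot z}=\operatorname{Ad}(g)(\mathfrak{g}_z)=\operatorname{Ad}(g)(\mathfrak{h})$, so that $g\cdot z\in X_\mathfrak{h}$ forces $\operatorname{Ad}(g)(\mathfrak{h})=\mathfrak{h}$, i.e. $g\in N(\mathfrak{h})=N(H^0)$. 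As $N(H^0)$ is a closed embedded subgroup with Lie algebra $\mathfrak{n}(\mathfrak{h})$ and $\exp$ is a local diffeomorphism near $0$, for $t$ small this gives $\sum_i t_i\xi_i\in\mathfrak{n}(\mathfrak{h})\cap\mathfrak{p}=0$, whence $t=0$. This merely reproduces, in the present coordinates, the local model $O_\mathfrak{h}(x)\cong N(H^0)/G_x\times W$ of Lemma \ref{le.1}.

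Property 4) then follows directly from Lemma \ref{le.2}. For $z\in U_\mathfrak{h}(x)$ the form $\alpha(z)$ is a nondegenerate bilinear form on $T_zX_{(H)}=T_zX$ that is $G_z$-invariant, because $\alpha$ is $G$-invariant and $G_z$ fixes $z$; Lemma \ref{le.2} therefore gives $\alpha(z)(\mathcal{P}(z),T_zX^r_\mathfrak{h})=0$ and nondegeneracy of the restrictions $\alpha(z)|_{\mathcal{P}(z)}$, $\alpha(z)|_{T_zX^r_\mathfrak{h}}$. Since the forms in question are (anti)symmetric the opposite pairing $\alpha(z)(T_zX^r_\mathfrak{h},\mathcal{P}(z))$ vanishes as well. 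In the basis of property 3), with the $\mathcal{P}(z)$-vectors listed first, this is exactly $B(y(z))=C(y(z))=0$ with $A(y(z)),D(y(z))$ nondegenerate.

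The main obstacle is property 2): one must be sure that the transverse coordinate lines genuinely leave $X^r_\mathfrak{h}$, and this rests on the transversality $\mathfrak{p}\cap\mathfrak{n}(\mathfrak{h})=0$ combined with the fact that membership in $X^r_\mathfrak{h}$ is governed by the normalizer $N(H^0)=N(\mathfrak{h})$. Everything else is a routine application of the inverse function theorem and of Lemma \ref{le.2}. One should also note that the chart is built without reference to $\alpha$, so that the block form in property 4) holds simultaneously for every $G$-invariant nondegenerate form, the splitting (\ref{eq.12}) and the orthogonality of Lemma \ref{le.2} being valid for all such $\alpha$.
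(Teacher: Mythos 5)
Your proposal is correct and follows essentially the same route as the paper: both construct the chart from the flow-out map $(y,z)\mapsto\exp(y)\cdot z$ with $y$ in $\mathfrak{p}$ transversal to $X^r_\mathfrak{h}$, and both obtain property $4)$ by applying Lemma~\ref{le.2} at points $z\in U_\mathfrak{h}(x)$, where $G_z$-invariance of $\alpha(z)$ comes for free. The only cosmetic difference is in how the slice property $2)$ is certified: the paper first builds the $H^0$-equivariant diffeomorphism $Y\times N(H^0)\to G$ (hence $\psi:Y\times O_\mathfrak{h}(x)\to O_1(x)$) and reads $2)$ off the model $O_\mathfrak{h}(x)\simeq N(H^0)/G_x\times W$ of Lemma~\ref{le.1}, while you use the inverse function theorem plus the stabilizer-conjugation and closed-subgroup argument ($\exp$ near $0$ meets $N(H^0)=N(\mathfrak{h})$ only along $\mathfrak{n}(\mathfrak{h})$, and $\mathfrak{p}\cap\mathfrak{n}(\mathfrak{h})=0$) --- the same decomposition in local form, as you yourself note.
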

\begin{proof}
Recall that the group
$N(H^0)$ is a closed subgroup of $G$ because
$H^0$, as the identity component of the closed subgroup
$H\subset G$, is also closed in $G$. Also we have the
$\operatorname{Ad}(H^0)$-invariant splitting
${\mathfrak{g}}=\mathfrak{p}\oplus\mathfrak{n}(\mathfrak{h})$
of ${\mathfrak{g}}$ (see formula~(\ref{eq.4})). Therefore for
some open $\operatorname{Ad}(H^0)$-invariant ball $Y$ around
$0$ in $\mathfrak{p}$ the map
$$
Y\times N(H^0)\to G,
\qquad (y,n)\mapsto \exp y \cdot n,
$$ is an $H^0$-equivariant diffeomorphism onto some
open neighborhood of the identity element in $G$.
This map intertwines the action
$h\cdot(y,n)=(\operatorname{Ad}(h)(y), hn)$ of $H^0$
on $Y\times N(H^0)$ and the left action of $H^0$ on $G$.
Thus the map
$$
Y\times N(H^0)/G_x\to G/G_x,
\quad
(y,nG_x)\mapsto (\exp y \cdot n)G_x
$$
is an $H^0$-equivariant diffeomorphism onto
open neighborhood of the point $o=G_x$ in $G/G_x$
and, consequently, the map
$$
Y\times N(H^0)/G_x\times W \to \phi(G/G_x\times W)=O(x),
\ (y,nG_x, w)\mapsto \phi(\exp y\cdot nG_x,w)
$$
is an $H^0$-equivariant diffeomorphism onto some open
$H^0$-invariant neighborhood
$O_1(x)\subset O(x)$ in $X_{(H)}$ containing
a neighborhood
$O_\mathfrak{h}(x)=\phi(N(H^0)/G_x\times W)$
of $x$ in $X^r_\mathfrak{h}$.
Here the action of $H^0$ on
$Y\times N(H^0)/G_x\times W$ is induced by the action of
$H^0$ on $Y\times N(H^0)$, i.e.
$h\cdot(y,nG_x,w)=(\operatorname{Ad}(h)(y), hnG_x,w)$ for
$h\in H^0$. By the $G$-equivariance of
$\phi$, the map
$$
\psi:Y\times O_\mathfrak{h}(x) \to O_1(x),\quad
(y,z)\mapsto (\exp y)\cdot z
$$
is also a diffeomorphism such that
$\psi(0,z)=z$ for all $z\in O_\mathfrak{h}(x)$.
Moreover, $\psi_{*(0,z)}(T_0Y,0)={\mathcal{P}}(z)$
for $z\in O_\mathfrak{h}(x)$ because by~(\ref{eq.11})
${\mathcal{P}}(z)=\{\xi_X(x),\ \xi\in\mathfrak{p}\}$
and $Y\subset\mathfrak{p}$.
This diffeomorphism $\psi$ is
$H^0$-equivariant with respect to the action
$h\cdot (y,z)=(\operatorname{Ad}(h)(y), h\cdot z)$ of
$H^0$ on $Y\times O_\mathfrak{h}(x)$ and the
$H^0$-action on $O_1(x)\subset X_{(H)}$.

The existence of the diffeomorphism
$\psi$ means in particular that there exists a coordinate
system $(U(x), y_1,\ldots,y_p,y_{p+1},\ldots,y_{m})$ in
$X_{(H)}$ around the point
$x\in X^r_\mathfrak{h}$ with properties $1)-3)$.

Let us prove property $4)$ for this coordinate system.
Since $h\cdot x=x$ for all
$h\in G_x$, the nondegenerate form $\alpha(x)$ is
$G_x$-invariant with respect to the tangent action
$h_{*}(x): T_x X\to T_x X$ of the group $G_x$. Then
$\alpha(x)({\mathcal{P}}(x),T_x X^r_\mathfrak{h})=0$
and
the restrictions $\alpha(x)|_{{\mathcal{P}}(x)}$
$\alpha(x)|_{T_x X^r_\mathfrak{h}}$ are nondegenerate in view of Lemma~\ref{le.2}.
Therefore by property 3) of the coordinate system
under consideration
around the point $x$  the matrices
$B(y(z))$, $C(y(z))$ vanish and
the matrices $A(y(z))$, $D(y(z))$
are nondegenerate for any $z\in U_\mathfrak{h}(x)$.
\qed
\end{proof}

\subsection{\tt The principal orbit type submanifold ${X_{(H)}}$
and \\ reduced bi-Poisson structures on ${X^r_\mathfrak{h}}$}
\label{s.2.3}

Here as before  $X_{(H)}$ is a principal orbit type submanifold of
$X$. We will use the notation introduced in the previous
subsections. Denote by
$\mathcal{E}(M)$ the space of smooth functions on a manifold
$M$.

Let $\eta$ be an $G$-invariant Poisson structure
on the manifold $X$. Put
$A^G\subset \mathcal{E}(X_{(H)})$ for the set of all
$G$-invariant functions on the open submanifold
$X_{(H)}\subset X$. By the $G$-invariance of
$\eta$, the space $A^G$ is a Poisson subalgebra of
$(\mathcal{E}(X_{(H)}),\eta)$. The structure
$\eta$ determines a Poisson structure
on the smooth manifold (see~(\ref{eq.8}))
$$
\mathbf X=X_{(H)}/G\simeq X^r_\mathfrak{h}/(N(H^0)/H^0)
$$
and $A^{G}\simeq\mathcal{E}(\mathbf X)$.
Put $\widetilde G= N(H^0)/H^0$. Denoting by
$\pi_{(H)}: X_{(H)}\to \mathbf X$ and
$\pi_{\mathfrak{h}}: X^r_{\mathfrak{h}}\to \mathbf X$
the natural submersions, we obtain two isomorphic Poisson algebras,
$A^G=\pi_{(H)}^*(\mathcal{E}(\mathbf X))$ of
$G$-invariant functions on $X_{(H)}$ and
$A^{\widetilde G}=\pi_{\mathfrak{h}}^*(\mathcal{E}(\mathbf X))$ of
$\widetilde G$-invariant functions on $X^r_{\mathfrak{h}}$,
where the second structure is induced by the natural identification
$A^G\simeq A^{\widetilde G}$. On the first algebra
$A^G$ its bracket is induced by the Poisson structure
$\eta$ defined on the whole space $X_{(H)}$.
A question arises: is there some Poisson structure on the manifold
$X^r_\mathfrak{h}$ which induces the above mentioned bracket on
$A^{\widetilde G}$. We will prove that  such a Poisson
structure exists if the Poisson structure
$\eta$ on $X$ is nondegenerate, i.e.
$\eta=\omega^{-1}$, where $\omega$ is  some
$G$-invariant symplectic structure on
$X$.

As it follows from Lemma~\ref{le.2} in this case the pair
$(X^r_\mathfrak{h}, \widetilde\omega)$, where,
$\widetilde \omega=i^*\omega$ and
$i: X^r_\mathfrak{h}\to X$ is the natural embedding, is a
symplectic manifold (the restriction $\omega(x)|_{T_x X^r_\mathfrak{h}}$
is nondegenerate for all $x\in X^r_\mathfrak{h}$). For any function
$f\in A^G$ its Hamiltonian vector field
${\mathcal{H}}_f$ is tangent to the submanifold
$X^r_\mathfrak{h}$ at each point
$x\in X^r_\mathfrak{h}$. This easily follows from the fact
that $df(\xi_X)=0$ for all
$\xi\in{\mathfrak{g}}$ and, in particular, for all
$\xi\in\mathfrak{p}$, i.e.
$\omega(x)({\mathcal{H}}_f(x),
{\mathcal{P}}(x)){\stackrel{\mathrm{def}}{=}}
-df(x)({\mathcal{P}}(x))=0$.
As $T_x X^r_\mathfrak{h}$ is a skew-orthogonal complement to
${\mathcal{P}}(x)$ by Lemma~\ref{le.2} we conclude that
${\mathcal{H}}_f(x)\in T_x X^r_\mathfrak{h}$.
Therefore, for any $x\in X^r_\mathfrak{h}$ and any vector field
$Y$ tangent to $X^r_\mathfrak{h}$ we have
$$
-d(i^*f)(x)(Y(x)){=}-df(x)(Y(x))=
\omega(x)({\mathcal{H}}_f(x),Y(x))=
\widetilde\omega(x)({\mathcal{H}}_f(x),Y(x)),
$$
i.e. the vector field
${\mathcal{H}}_f|_{X^r_\mathfrak{h}}$
is the hamiltonian vector field of the function
$i^*f$ with respect to the form
$\widetilde\omega$. Moreover, for any functions
$f_1,f_2\in A^G$ at
$x\in X^r_\mathfrak{h}$ we get the equality
\begin{equation}\label{eq.13}
\begin{split}
\eta(x)(df_1(x),&\, df_2(x))
{\stackrel{\mathrm{def}}{=}}\omega(x)({\mathcal{H}}_{f_2}(x),
{\mathcal{H}}_{f_1}(x)) \\
&=\widetilde\omega(x)({\mathcal{H}}_{i^*f_2}(x),
{\mathcal{H}}_{i^*f_1}(x))
=\widetilde\eta(x)(d(i^*f_1)(x), d(i^*f_2)(x)),
\end{split}
\end{equation}
where $\widetilde\eta$ is the Poisson structure
$\widetilde\omega^{-1}$ on $X^r_\mathfrak{h}$.

A {\em pair}
$(\eta_1,\eta_2)$ of linearly independent bi-vector fields
(bi-vectors for short) on a manifold
$X$ is called {\em Poisson} if
$\eta^t{\stackrel{\mathrm{def}}{=}} t_1\eta_1+t_2\eta_2$
is a Poisson bi-vector for any
$t=(t_1,t_2)\in {\mathbb R}^2$, i.e. each bi-vector
$\eta^t$ determines on
$X$ a Poisson structure with the Poisson bracket
$\{,\}^{t}:(f_1,f_2)\mapsto \eta^t(df_1,df_2)$;
the whole family of Poisson bi-vectors
$\{\eta^t\}_{t\in{\mathbb R}^2}$
is called a {\it bi-Poisson structure}. Remark here that a pair
$(\eta_1,\eta_2)$ of linearly independent Poisson structures
is Poisson if and only if
$t_1\eta_1+t_2\eta_2$ is a Poisson bi-vector for some
$(t_1,t_2)\in {\mathbb R}^2$ nonproportional to
$(1,0),(0,1)$. Indeed, the bi-vector
$\eta^t$ is Poisson if and only if
$[\eta^t,\eta^t]_S=0$, where $[,]_S$
is the so-called Schouten bracket \cite[\S 10.6]{MR}. The last
equation is quadratic with respect to
$t_1\colon t_2$.

A bi-Poisson structure
$\{\eta^t\}$ (we shall often skip the parameter space) can
be viewed as a two-dimensional vector space of Poisson
bi-vectors, the Poisson pair
$(\eta_1,\eta_2)$ as a basis in this space. Obviously, if
the Poisson structures
$\eta_1$ and $\eta_2$ are $G$-invariant,
then these structures induce a bi-Poisson structure on the manifold
$\mathbf X=X_{(H)}/G\simeq X^r_\mathfrak{h}/(N(H^0)/H^0)$
and, consequently, linear families of brackets on the spaces
$\mathcal{E}(\mathbf X)$, $A^G$, and
$A^{\widetilde G}$. The theorem below asserts that in a
particular case the linear family of brackets on the space
$A^{\widetilde G}$ is induced by some canonically defined
bi-Poisson structure on the manifold
$X^r_\mathfrak{h}$. Note that the action of the group
$\widetilde G= N(H^0)/H^0$ on
$X^r_\mathfrak{h}$ is locally free.

\begin{theorem}\label{th.4}
Let $\eta_1=\omega_1^{-1}$ and $\eta_2=\omega_2^{-1}$, where
$\omega_1,\omega_2$ are some
$G$-invariant symplectic forms on
$X$. Assume that the Poisson structures $\eta_1$ and
$\eta_2$ determine a bi-Poisson structure on
$X$. If the forms $\widetilde\omega_1=i^*\omega_1$ and
$\widetilde\omega_2=i^*\omega_2$ are linearly independent on
$X^r_\mathfrak{h}$ (here $i: X^r_\mathfrak{h}\to X$
is the natural embedding), then the Poisson structures
$\widetilde\eta_1=\widetilde\omega_1^{-1}$ and
$\widetilde\eta_2=\widetilde\omega_2^{-1}$ determine a
$\widetilde G$-invariant bi-Poisson structure on
$X^r_\mathfrak{h}$. This bi-Poisson structure induces on the
space $A^{\widetilde G}=A^G$ the same linear family of brackets as
the bi-Poisson structure induced by the pair
$(\eta_1,\eta_2)$ on the space
$A^G$. The action of the group $\widetilde G=N(H^0)/H^0$ on
$X^r_\mathfrak{h}$ is locally free.
\end{theorem}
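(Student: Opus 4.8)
The plan is to reduce everything to the single structural fact, furnished by Lemma~\ref{le.2}, that the canonical complement $\mathcal{P}$ is \emph{simultaneously} orthogonal to $TX^r_\mathfrak{h}$ with respect to \emph{every} $G$-invariant nondegenerate form, hence in particular with respect to $\omega_1$, $\omega_2$, and each nondegenerate member of the pencil $\eta^t$. The last assertion of the theorem is nothing new: the local freeness of the $\widetilde G$-action is exactly Lemma~\ref{le.1}. The $\widetilde G$-invariance is also immediate, since $\widetilde\omega_i=i^*\omega_i$ is invariant under the $N(H^0)$-action preserving $X^r_\mathfrak{h}$ while $H^0$ acts trivially there, so $\widetilde\eta_i=\widetilde\omega_i^{-1}$, and therefore the whole family $\{\widetilde\eta^t\}$, is $\widetilde G$-invariant. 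Each $\widetilde\omega_i$ is closed, because $d\widetilde\omega_i=i^*d\omega_i=0$, and nondegenerate by Lemma~\ref{le.2}; thus $\widetilde\eta_1,\widetilde\eta_2$ are genuine symplectic Poisson structures, linearly independent by hypothesis.

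The heart of the matter is to show that \emph{every} $\widetilde\eta^t=t_1\widetilde\eta_1+t_2\widetilde\eta_2$ is Poisson. First I would record the Dirac reduction identity in a form valid for all covectors. Fix $x\in X^r_\mathfrak{h}$ and, for $\beta\in T^*_xX^r_\mathfrak{h}$, let $\tilde\beta\in T^*_xX$ be the unique covector extending $\beta$ and annihilating $\mathcal{P}(x)$; this is well defined by the splitting $T_xX=\mathcal{P}(x)\oplus T_xX^r_\mathfrak{h}$. For a $G$-invariant symplectic $\omega$ with $\eta=\omega^{-1}$, put $u=\eta^\sharp(\tilde\beta)$, so that $\omega(u,v)=\tilde\beta(v)$ for all $v$. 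For $v\in\mathcal{P}(x)$ this vanishes, so $u$ is $\omega$-orthogonal to $\mathcal{P}(x)$ and hence $u\in T_xX^r_\mathfrak{h}$ by Lemma~\ref{le.2}; for $v\in T_xX^r_\mathfrak{h}$ one gets $\widetilde\omega(u,v)=\omega(u,v)=\beta(v)$, whence $u=\widetilde\eta^\sharp(\beta)$ with $\widetilde\eta=(i^*\omega)^{-1}$. (The computation preceding~(\ref{eq.13}) is the special case $\beta=d(i^*f)(x)$, $f\in A^G$.) The decisive point is that the extension $\beta\mapsto\tilde\beta$ depends only on $\mathcal{P}$, not on $\omega$; since by Lemma~\ref{le.2} the splitting is orthogonal for $\omega_1$ and $\omega_2$ at once, the \emph{same} extension yields $\widetilde\eta_i^\sharp(\beta)=\eta_i^\sharp(\tilde\beta)$ for $i=1,2$. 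Taking the combination $t_1(\cdot)+t_2(\cdot)$ gives, at every $x\in X^r_\mathfrak{h}$ and every $t$,
\begin{equation}\label{eq.sharp}
(\widetilde\eta^t)^\sharp(\beta)=(\eta^t)^\sharp(\tilde\beta).
\end{equation}

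To conclude that $\widetilde\eta^t$ is Poisson I would combine~(\ref{eq.sharp}) with a deformation argument. Because $\eta_1=\omega_1^{-1}$ is nondegenerate on all of $X$, the set where $\eta^t$ is invertible is an open neighbourhood of $X\times\{(1,0)\}$ in $X\times\mathbb{R}^2$; hence for each $x_0\in X^r_\mathfrak{h}$ there is a product neighbourhood $V\times B\subset X_{(H)}\times\mathbb{R}^2$ of $(x_0,(1,0))$ on which $\eta^t$ is invertible. For $t\in B$ the form $\omega^{[t]}:=(\eta^t)^{-1}$ is defined on $V$, is $G$-invariant (its value at each point is $G_x$-invariant, being the inverse of the $G_x$-invariant $\eta^t(x)$), and is \emph{closed}, because $\eta^t$ is Poisson and nondegenerate there. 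Since Lemma~\ref{le.2} applies to this $G$-invariant nondegenerate form, $\mathcal{P}$ is again its orthogonal complement, so the reduction identity applies to $\omega^{[t]}$ and, compared with~(\ref{eq.sharp}), yields $\widetilde\eta^t=(i^*\omega^{[t]})^{-1}$ on $V\cap X^r_\mathfrak{h}$. The restricted form $i^*\omega^{[t]}$ is closed and, by Lemma~\ref{le.2}, nondegenerate, hence symplectic, so $\widetilde\eta^t$ is Poisson on $V\cap X^r_\mathfrak{h}$ for every $t\in B$. Now $t\mapsto[\widetilde\eta^t,\widetilde\eta^t]_S(x_0)$ is a quadratic polynomial in $(t_1,t_2)$ vanishing on the open set $B$, so it vanishes identically; as $x_0$ was arbitrary, $\{\widetilde\eta^t\}$ is a bi-Poisson structure. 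Finally, the agreement of the two families of brackets on $A^G\simeq A^{\widetilde G}$ follows by writing~(\ref{eq.13}) for $\omega_1$ and $\omega_2$ and forming the same linear combination, which gives $\eta^t(df_1,df_2)=\widetilde\eta^t(d(i^*f_1),d(i^*f_2))$ at points of $X^r_\mathfrak{h}$ for all $f_1,f_2\in A^G$.

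I expect the main obstacle to be precisely the passage from ``Poisson for generic parameters'' to ``Poisson for all $t$'': the linear combination $\widetilde\eta^t$ has no a priori reason to be the inverse of a closed form, and the only available leverage is that the canonical complement $\mathcal{P}$ is independent of the chosen invariant form, so that the $\omega$-dependent Dirac reduction nonetheless commutes with the formation of the pencil. Verifying that $\mathcal{P}$ really is the orthogonal complement for the auxiliary forms $\omega^{[t]}$ (via Lemma~\ref{le.2}) and controlling the nondegeneracy locus of $\eta^t$ near a point are the delicate points; once these are in place, the quadratic-polynomial argument closes the gap cleanly.
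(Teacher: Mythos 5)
Your proof is correct, and its outer skeleton is in fact the paper's: pick $t$ for which $\eta^t$ is nondegenerate near a given point of $X^r_\mathfrak{h}$, invert it to a closed $G$-invariant $2$-form $\omega^t$, identify $(i^*\omega^t)^{-1}$ with $t_1\widetilde\eta_1+t_2\widetilde\eta_2$, and use the quadratic dependence of the Schouten bracket $[\widetilde\eta^t,\widetilde\eta^t]_S$ on $(t_1,t_2)$ to pass from generic $t$ to all $t$; the local freeness and the agreement of brackets on $A^G\simeq A^{\widetilde G}$ come from Lemma~\ref{le.1} and linearity in~(\ref{eq.13}) in both treatments. Where you genuinely diverge is the identification step. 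The paper performs it in the adapted coordinates of Lemma~\ref{th.3}: by property $4)$ there, the matrices of $\omega_1$ and $\omega_2$ are \emph{simultaneously} block-diagonal along $U_\mathfrak{h}(x)$ (formula~(\ref{eq.14})), so $t_1W_1^{-1}+t_2W_2^{-1}$ and its inverse are block-diagonal as well, and restricting to $X^r_\mathfrak{h}$ amounts to extracting the $D$-block, giving $(i^*\omega^t)^{-1}=t_1D_1^{-1}+t_2D_2^{-1}$ directly. You instead prove the coordinate-free sharp-map identity $\widetilde\eta^{\,\sharp}(\beta)=\eta^\sharp(\tilde\beta)$, where $\tilde\beta$ is the extension of $\beta$ annihilating $\mathcal{P}(x)$, and observe that this extension depends only on $\mathcal{P}$, which by Lemma~\ref{le.2} is orthogonal to $T_xX^r_\mathfrak{h}$ for \emph{every} invariant nondegenerate form at once; linearity of $\beta\mapsto\tilde\beta$ then makes Dirac reduction commute with forming the pencil. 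This is exactly the invariant content of the paper's block-matrix computation: you use only Lemma~\ref{le.2} and bypass Lemma~\ref{th.3} entirely (one of the two ingredients the paper labels as crucial), generalizing the tangency argument preceding~(\ref{eq.13}) from differentials of invariant functions to arbitrary covectors. The trade-off is fair on both sides: the paper's chart exhibits nondegeneracy, restriction and closedness simultaneously in one picture, and Lemma~\ref{th.3} has independent uses, while your version isolates the single structural input --- the $\omega$-independence of the complement $\mathcal{P}$ --- and extends verbatim to pencils spanned by any number of invariant symplectic forms. Two details you handled correctly and should keep explicit in a write-up: Lemma~\ref{le.2} is pointwise, so applying it to the auxiliary form $\omega^{[t]}(z)$ requires only $G_z$-invariance of its value at $z$, which holds since $\eta^t$ is $G$-invariant; and where the paper fixes a single $t$ nonproportional to $(1,0)$ and $(0,1)$ and invokes the quadraticity remark stated before the theorem, your open parameter set $B$ near $(1,0)$ achieves the same conclusion via the vanishing of a quadratic polynomial on an open set --- the two finishing moves are equivalent.
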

\begin{proof}
It is sufficient to perform local reasoning.
Fix some point $x\in X^r_\mathfrak{h}$ and consider in $X$
the coordinate system
$(U(x), y_1,\ldots,y_p,y_{p+1},\ldots,y_{m})$
around the point $x$
as in Lemma~\ref{th.3}. Then in these coordinates the
symplectic forms $\omega_a$, $a=1,2$ are described by the
skew-symmetric matrices
$$
W_a(y(z))=\left(
\begin{array}{cc}
 A_a(y(z))  & B_a(y(z)) \\
C_a(y(z))   & D_a(y(z))
\end{array}\right), \quad\text{for}\ z\in U(x),
$$
such that
\begin{equation}\label{eq.14}
W_a(y(z))=
\left(
\begin{array}{cc}
 A_a(y(z))  & 0 \\
0   & D_a(y(z))
\end{array}\right),  \quad\text{for}\ z\in U_\mathfrak{h}(x)\subset U(x).
\end{equation}
Recall that $U_\mathfrak{h}(x)=U(x)\cap X^r_\mathfrak{h}$ and
$y_1(z)=\ldots=y_p(z)=0$ if
$z\in U_\mathfrak{h}(x)$. By the definition, the Poisson
structure $\eta^t=t_1\eta_1+t_2\eta_2$ is determined by the
$m\times m$-matrix
$t_1 W_1^{-1}(y)+t_2 W_2^{-1}(y)$:
$$
\eta^t(y)=\sum_{1\leqslant i<j\leqslant m} \bigl(t_1
W_1^{-1}+t_2 W_2^{-1}\bigr)_{ij}(y) \frac{\partial}{\partial
y_i}\land \frac{\partial }{\partial y_j}.
$$
Since the Poisson structures
$\eta_1,\eta_2$ are nondegenerate, then for some
$t=(t_1,t_2)\in {\mathbb R}^2\setminus(({\mathbb R}\times
\{0\})\cup(\{0\}\times{\mathbb R}))$
Poisson structure
$\eta^t$ is nondegenerate in each point of some open
neighborhood of the point
$x$, which we assume, without loss of generality, to be the
original open neighborhood
$U(x)$. Then the skew-symmetric matrix
$\bigl(t_1 W_1^{-1}(y)+t_2 W_2^{-1}(y)\bigr)^{-1}$
is a matrix of some symplectic form
$\omega^t$ on $U(x)$, i.e. the form
$$
\sum_{1\leqslant i<j\leqslant m} \Bigl(\bigl(t_1 W_1^{-1}
+t_2 W_2^{-1}\bigr)^{-1}\Bigr)_{ij}(y) dy_i\land dy_j
$$
is closed. Thus the form
$i^*\omega^t$, where
$i|_{U_\mathfrak{h}(x)}: U_\mathfrak{h}(x)\to U(x)$
is the embedding
$(y_{p+1},\ldots,y_m)\mapsto (0,\ldots,0,y_{p+1},\ldots,y_m)$
is also closed. By~(\ref{eq.14}) for points
$z\in U_\mathfrak{h}(x)$ with coordinates
$y=(0,\ldots,0,y_{p+1},\ldots,y_m)$ we have
$$
\bigl(t_1 W_1^{-1}
+t_2 W_2^{-1}\bigr)^{-1}(y)=
\mbox{\small
$\left(
\begin{array}{cc}
\bigl(t_1 A_1^{-1}
+t_2 A_2^{-1}\bigr)^{-1}(y)
 & 0 \\
0   & \bigl(t_1 D_1^{-1}
+t_2 D_2^{-1}\bigr)^{-1}(y)
\end{array}\right)$}.
$$
Taking into account that $y_1(z)=\ldots=y_p(z)=0$ on
the set $U_\mathfrak{h}(x)$ we obtain that the form
$$
(i^*\omega^t)(y)=\sum_{p+1\leqslant i<j\leqslant m}
\Bigl(\bigl(t_1 D_1^{-1}
+t_2 D_2^{-1}\bigr)^{-1}\Bigr)_{ij}(y) dy_i\land dy_j,
$$
where $y=(0,\ldots,0,y_{p+1},\ldots,y_m)$,
is closed. This means that the tensor
$\widetilde\eta^t=t_1\widetilde\eta_1+t_2\widetilde\eta_2$,
where $\widetilde\eta_1=\widetilde\omega_1^{-1}$ and
$\widetilde\eta_2=\widetilde\omega_2^{-1}$,
determines nondegenerate Poisson structure in the open subset
$U_\mathfrak{h}(x)\subset X^r_\mathfrak{h}$. Since
$(t_1,t_2)\not\in ({\mathbb
R}\times\{0\})\cup(\{0\}\times{\mathbb R})$,
the Poisson structures
$\widetilde\eta_1$ and $\widetilde\eta_2$ determine a
$\widetilde G$-invariant bi-Poisson structure on
$X^r_\mathfrak{h}$.

By~(\ref{eq.13}) the bracket on
the space $A^G$ at the point
$x$ induced by the Poisson structure
$\eta_a(x)$, $a=1,2$, i.e. by the symplectic structure
$\omega_a(x)$, coincides with the bracket induced by the Poisson
structure $\widetilde\eta_a(x)$. By linearity
the brackets on the space $A^G$ at the point
$x$ induced by Poisson structures $\eta^t(x)$ and
$\widetilde\eta^t(x)$ coincide for each
$t\in {\mathbb R}^2$.
\qed
\end{proof}

\section{Reduction of a bi-Poisson structure on the cotangent bundle of
the adjoint orbit of a compact Lie group}
\label{s.3}

Here we calculate the reduced bi-Poisson structure
on the manifold $X^r_\mathfrak{h}$ in the case when
$X$ is the cotangent bundle of
the adjoint orbit of a compact Lie group $G$ with an
invariant bi-Poisson structure constructed
in our paper~\cite{MP}.

Let $G$ be a compact connected Lie group with the Lie algebra
${\mathfrak{g}}$. Denote by $\langle\cdot ,\cdot\rangle$
an $\operatorname{Ad}(G)$-invariant scalar product on ${\mathfrak{g}}$.
Let ${\mathcal{O}}\subset{\mathfrak{g}}$ be the
$\operatorname{Ad}(G)$-orbit through some element
$a\in{\mathfrak{g}}$ of the Lie algebra
${\mathfrak{g}}$.  Then
${\mathcal{O}}=G/K$, where
$$
K=\{g\in G: \operatorname{Ad}(g)(a)=a\}
$$
is the isotropy group of $a$ (a connected closed subgroup of
$G$~\cite[Lemma 5]{Ko}). Denote by
$\Omega$ the canonical symplectic form on the cotangent
bundle $T^*{\mathcal{O}}$. The scalar product
$\langle\cdot,\cdot\rangle$ defines a $G$-invariant
metric on $G/K$. This metric identifies the cotangent
bundle $T^*{\mathcal{O}}$ and the tangent bundle $T{\mathcal{O}}$.
Thus we can also talk about the canonical 2-form
$\Omega$ on $T{\mathcal{O}}$. The symplectic form $\Omega$ is
$G$-invariant with respect to the natural action of $G$
on $T{\mathcal{O}}$ (extension of the action of $G$ on ${\mathcal{O}}$).

Let $\pi: T{\mathcal{O}}\to {\mathcal{O}}$
be the canonical projection. The orbit
${\mathcal{O}}\subset{\mathfrak{g}}$
is a symplectic manifold with the Kirillov-Kostant-Souriau form
$\omega$ (here we identified the reductive Lie
algebra ${\mathfrak{g}}$ with its dual space
${\mathfrak{g}}^*$ using the invariant scalar product
$\langle\cdot ,\cdot\rangle$ on
${\mathfrak{g}}$). So we can consider the closed
$G$-invariant 2-form $\Omega+\pi^*\omega$ on
$T{\mathcal{O}}$. This is a symplectic form on the manifold
$X=T{\mathcal{O}}$~\cite[Prop. 1.6]{MP}. Put $\omega_1=\Omega$ and
$\omega_2=\Omega+\pi^*\omega$. Write
$\eta_1=\omega_1^{-1}$, $\eta_2=\omega_2^{-1}$
for the inverse Poisson bi-vectors. The pair of Poisson structures
$(\eta_1,\eta_2)$ determines an
$G$-invariant bi-Poisson structure
$\{\eta^t=t_1\eta_1+t_1\eta_1\}$,
$(t_1,t_2)\in{\mathbb R}^2$, on
$X$ and the Poisson structure
$\eta^t$ is degenerate if and only if
$t_1+t_2=0$~\cite[Prop. 1.6]{MP}.

Let $\hat G$ be any connected closed Lie subgroup
of $G$ with the Lie algebra $\hat{\mathfrak g} \subset\mathfrak g$
containing the element $a$. Let $\hat {\mathcal{O}}$ be
the adjoint orbit through the element $a\in\hat{\mathfrak g}$
in the Lie algebra $\hat {\mathfrak{g}}$.
This orbit is a suborbit of ${\mathcal{O}}$,
i.e. $\hat{\mathcal{O}}=\operatorname{Ad}(\hat G)(a)\subset{\mathcal{O}}$.
Therefore $\hat{\mathcal{O}}=\hat G/\hat K$,
where $\hat K=K\cap \hat G$.
Denote by $j: T \hat{\mathcal{O}}\to T {\mathcal{O}}$
the natural embedding.

\begin{lemma}\label{le.5}
Let $\hat G$ be any connected closed Lie subgroup of
$G$ with the Lie algebra
$\hat{\mathfrak g}\subset \mathfrak g$ containing the element $a$. The restrictions
$\tilde\omega_1=\omega_1|_{T \hat{\mathcal{O}}}=j^*\omega_1$
and $\tilde\omega_2=\omega_2|_{T \hat{\mathcal{O}}}=j^*\omega_2$
are symplectic forms on the tangent bundle
$T \hat{\mathcal{O}}\subset T {\mathcal{O}}$.
The Poisson structures
$\tilde\eta_1=\tilde\omega_1^{-1}$ and
$\tilde\eta_2=\tilde\omega_2^{-1}$ determine a
$\hat G$-invariant bi-Poisson structure
$\{\tilde\eta^t=t_1\tilde\eta_1+t_1\tilde\eta_1\}$,
$(t_1,t_2)\in{\mathbb R}^2$, on
$T \hat{\mathcal{O}}$.
\end{lemma}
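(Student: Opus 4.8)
The plan is to show that the two restricted forms $\tilde\omega_1,\tilde\omega_2$ are nothing but the forms $\omega_1,\omega_2$ of the very same construction, carried out intrinsically for the compact connected group $\hat G$ acting on its own adjoint orbit $\hat{\mathcal{O}}=\operatorname{Ad}(\hat G)(a)\subset\hat{\mathfrak g}$, equipped with the restricted invariant product $\langle\cdot,\cdot\rangle|_{\hat{\mathfrak g}}$. Once this is established, the whole statement follows by applying \cite[Prop.~1.6]{MP} verbatim to $\hat G$ in place of $G$: indeed $\hat G$ is compact (a closed subgroup of the compact group $G$) and connected, and the isotropy group $\hat K=K\cap\hat G$ of $a$ in $\hat G$ is connected by \cite[Lemma~5]{Ko}, so $\hat{\mathcal{O}}=\hat G/\hat K$ is an adjoint orbit to which the construction applies. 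Write $\hat\Omega$ for the canonical symplectic form on $T\hat{\mathcal{O}}$, $\hat\omega$ for the Kirillov--Kostant--Souriau form on $\hat{\mathcal{O}}$, and $\hat\pi\colon T\hat{\mathcal{O}}\to\hat{\mathcal{O}}$ for the projection. What I must verify are the two identities $\tilde\omega_1=\hat\Omega$ and $\tilde\omega_2=\hat\Omega+\hat\pi^*\hat\omega$, i.e.\ $j^*\Omega=\hat\Omega$ and $j^*(\pi^*\omega)=\hat\pi^*\hat\omega$.

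The second identity is the easy one. Denoting by $\iota\colon\hat{\mathcal{O}}\hookrightarrow{\mathcal{O}}$ the inclusion, we have $\pi\circ j=\iota\circ\hat\pi$, whence $j^*\pi^*\omega=\hat\pi^*\iota^*\omega$, and it remains to see that $\iota^*\omega=\hat\omega$. This is immediate from the explicit formula: at a point $b\in\hat{\mathcal{O}}$ the tangent vectors are $\operatorname{ad}_\xi b$ with $\xi\in\hat{\mathfrak g}$, and both forms send $(\operatorname{ad}_\xi b,\operatorname{ad}_\eta b)$ to $\langle b,[\xi,\eta]\rangle$ for $\xi,\eta\in\hat{\mathfrak g}$. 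Since $\hat{\mathcal{O}}$ is an adjoint orbit, its KKS form $\hat\omega$ is symplectic; in particular $\iota^*\omega$ is nondegenerate and $\hat{\mathcal{O}}$ is a symplectic submanifold of $({\mathcal{O}},\omega)$.

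The main obstacle is the first identity $j^*\Omega=\hat\Omega$, which says that restricting the canonical symplectic form of a cotangent bundle to the tangent bundle of a Riemannian submanifold yields the intrinsic canonical form. I would prove it at the level of the canonical $1$-forms. Let $\Theta$ (resp.\ $\hat\Theta$) be the canonical $1$-form of $T^*{\mathcal{O}}$ (resp.\ $T^*\hat{\mathcal{O}}$) transported to $T{\mathcal{O}}$ (resp.\ $T\hat{\mathcal{O}}$) by the metric, so that $\Omega=d\Theta$ and $\hat\Omega=d\hat\Theta$. Unwinding the definition gives, for $v\in T_p{\mathcal{O}}$ and $w\in T_v(T{\mathcal{O}})$,
\[
\Theta_v(w)=\langle v,\,d\pi(w)\rangle_p ,
\]
and likewise for $\hat\Theta$. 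Using $\pi\circ j=\iota\circ\hat\pi$ one computes, for $v\in T_p\hat{\mathcal{O}}$ and $w\in T_v(T\hat{\mathcal{O}})$,
\[
(j^*\Theta)_v(w)=\langle v,\,d\pi(dj(w))\rangle_p
=\langle v,\,d\hat\pi(w)\rangle_p=\hat\Theta_v(w),
\]
the last equality holding because $v$ and $d\hat\pi(w)$ both lie in $T_p\hat{\mathcal{O}}$ and the metric on $\hat{\mathcal{O}}$ is the restriction of that on ${\mathcal{O}}$. Hence $j^*\Theta=\hat\Theta$, and taking exterior derivatives, $j^*\Omega=d(j^*\Theta)=d\hat\Theta=\hat\Omega$.

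Combining the two identities, $\tilde\omega_1=\hat\Omega$ and $\tilde\omega_2=\hat\Omega+\hat\pi^*\hat\omega$ are exactly the two forms of the construction for $\hat G$; by \cite[Prop.~1.6]{MP} they are symplectic and their inverses $\tilde\eta_1,\tilde\eta_2$ form a bi-Poisson structure (with $\tilde\eta^t$ degenerate precisely when $t_1+t_2=0$), and they are linearly independent since $\hat\pi^*\hat\omega\neq0$. Finally, $\hat G$-invariance is automatic: $\omega_1,\omega_2$ are $G$-invariant, hence $\hat G$-invariant, and $j$ is $\hat G$-equivariant, so the pullbacks $\tilde\omega_1,\tilde\omega_2$ and the induced Poisson tensors $\tilde\eta_1,\tilde\eta_2$ are $\hat G$-invariant.
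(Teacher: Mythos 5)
Your proof is correct and follows the same overall strategy as the paper's: identify $\tilde\omega_1$ with the intrinsic canonical form $\hat\Omega$ of $T\hat{\mathcal{O}}$ and $\tilde\omega_2$ with $\hat\Omega+\hat\pi^*\hat\omega$, then apply \cite[Prop.~1.6]{MP} to the pair $(\hat G,\hat K)$; your treatment of the Kirillov--Kostant--Souriau part ($\iota^*\omega=\hat\omega$ via the explicit formula at a point plus invariance) is exactly the paper's computation. The one genuine difference is the identity $j^*\Omega=\hat\Omega$: the paper simply cites \cite[Prop.~4]{My16}, whereas you prove it directly by pulling back the metric-transported tautological $1$-forms, $\Theta_v(w)=\langle v,d\pi(w)\rangle$, and using $\pi\circ j=\iota\circ\hat\pi$. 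This is a nice, self-contained replacement for the citation, but be aware that it compresses the only nontrivial point into an unproven assertion: namely that the normal metric on $\hat{\mathcal{O}}=\hat G/\hat K$ defined intrinsically by $\langle\cdot,\cdot\rangle|_{\hat{\mathfrak g}}$ coincides with the restriction to $T\hat{\mathcal{O}}$ of the normal metric on $\mathcal{O}=G/K$. For a general inclusion of homogeneous spaces this is \emph{false}: writing $\hat{\mathfrak m}$ for the orthogonal complement of $\hat{\mathfrak k}$ in $\hat{\mathfrak g}$, the two metrics agree along $\hat{\mathcal{O}}$ precisely when $\hat{\mathfrak m}\subset\mathfrak m$, and a subgroup orbit's intrinsic normal metric need not be the induced submanifold metric. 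Here it does hold, for a reason specific to adjoint orbits that you should record: since $\mathfrak k$ and $\hat{\mathfrak k}$ are the centralizers of the same element $a$ in $\mathfrak g$ and in $\hat{\mathfrak g}$, and $\operatorname{ad}(a)$ is skew-symmetric with respect to $\langle\cdot,\cdot\rangle$ (and with respect to its restriction to $\hat{\mathfrak g}$), one has orthogonal decompositions $\mathfrak g=\mathfrak k\oplus[a,\mathfrak g]$ and $\hat{\mathfrak g}=\hat{\mathfrak k}\oplus[a,\hat{\mathfrak g}]$, whence $\hat{\mathfrak m}=[a,\hat{\mathfrak g}]\subseteq[a,\mathfrak g]=\mathfrak m$; the metrics then agree at the base point, and agreement propagates over $\hat{\mathcal{O}}$ by $\hat G$-invariance of both. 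With this line inserted, your computation $j^*\Theta=\hat\Theta$, hence $j^*\Omega=d(j^*\Theta)=\hat\Omega$, is complete, and the remaining steps (invariance from the $\hat G$-equivariance of $j$, nondegeneracy and compatibility from \cite[Prop.~1.6]{MP}) are sound; what your route buys is independence from \cite{My16}, at the cost of having to make the metric-restriction fact explicit.
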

\begin{proof}
The restriction of the scalar product
$\langle\cdot,\cdot\rangle$ to the subalgebra $\hat {\mathfrak g}$
defines a $\hat G$-invariant
metric on $\hat G/\hat K$. This metric identifies the cotangent
bundle $T^*\hat {\mathcal{O}}$ and the tangent bundle $T\hat {\mathcal{O}}$.
Denote by $\hat\Omega$ the canonical 2-form on $T\hat{\mathcal{O}}$.
By~\cite[Prop. 4]{My16} the canonical form $\hat\Omega$
coincides with the restriction $\Omega|_{T\hat{\mathcal{O}}}$
of the canonical form $\Omega$, i.e.
$\hat\Omega=\tilde\omega_1$.

Identifying the compact Lie algebra
$\hat{\mathfrak{g}}$ with its dual space
$\hat{\mathfrak{g}}^*$ by means of the restriction of the
invariant scalar product
$\langle\cdot ,\cdot\rangle$ to
$\hat{\mathfrak{g}}$ we can say about the Kirillov-Kostant-Souriau
form $\hat\omega$ on the orbit
$\hat{\mathcal{O}}\subset \hat{\mathfrak{g}}$.
Let us show that $\hat\omega=\omega|_{\hat{\mathcal{O}}}$.

Indeed, by definition the form
$\omega$ is $G$-invariant and at the point $a\in{\mathcal{O}}$ we
have
$$
\omega(a)([a,\xi_1],[a,\xi_2])=-\langle
a,[\xi_1,\xi_2]\rangle, \quad \forall\xi_1,\xi_2\in{\mathfrak g},
$$
where we consider the vectors $[a,\xi_1],[a,\xi_2]\in
T_a{\mathfrak g}={\mathfrak g}$ as tangent vectors to the orbit
${\mathcal{O}}\subset{\mathfrak g}$ at the point $a\in{\mathcal{O}}$.
Since the form $\hat\omega$ is described by the similar relation
on the Lie algebra $\hat{\mathfrak g}$ containing the element
$a$, we obtain that $\omega(a)|_{T_a\hat{\mathcal{O}}}=\hat\omega(a)$.
Thus $\hat\omega=\omega|_{\hat{\mathcal{O}}}$ by the
$\hat G$-invariance of the forms $\hat\omega$ and  $\omega$.

Let $\hat\pi: T\hat{\mathcal{O}}\to \hat{\mathcal{O}}$
be the canonical projection. Consider the closed
$\hat G$-invariant 2-form
$\hat\Omega+\hat\pi^*\hat\omega$ on
$T\hat{\mathcal{O}}$. As above, the pair of the
$\hat G$-invariant symplectic forms
$\hat\omega_1=\hat\Omega$ and
$\hat\omega_2=\hat\Omega+\hat\pi^*\hat\omega$ on
$T\hat {\mathcal{O}}$ determines an
$\hat G$-invariant bi-Poisson structure  by~\cite[Prop. 1.6]{MP}.
Taking into account that
$\hat\pi^*\hat\omega=\hat\pi^*(\omega|_{\hat{\mathcal{O}}})
=(\pi^*\omega)|_{T\hat{\mathcal{O}}}$
and, consequently, $\hat\omega_k=\tilde\omega_k$, $k=1,2$,
we complete the proof.
\qed
\end{proof}

By the lemma above the $G$-invariant bi-Poisson structure
$\{\eta^t=t_1\eta_1+t_1\eta_1\}$,
$(t_1,t_2)\in{\mathbb R}^2$, on $T{\mathcal{O}}$ determines
the $\hat G$-invariant bi-Poisson structure
$\{\tilde\eta^t=t_1\tilde\eta_1+t_1\tilde\eta_1\}$,
$(t_1,t_2)\in{\mathbb R}^2$, on
$T\hat{\mathcal{O}}$. In general the natural embedding
$j: T\hat{\mathcal{O}}\to T{\mathcal{O}}$
is not a Poisson map w.r.t. the Poisson structures $\tilde\eta^t$ and $\eta^t$, i.e. $j^*:(\mathcal E(T{\mathcal{O}}),\{,\}_{\eta^t})\to (\mathcal E(T\hat{\mathcal{O}}),\{,\}_{\tilde\eta^t})$ is not a Lie algebra homomorphism. Moreover, the restriction $j^*|_{A^G}:(A^G,\{,\}_{\eta^t})\to (\mathcal E(T\hat{\mathcal{O}}),\{,\}_{\tilde\eta^t})$ to the space $A^G$ of the $G$-invariant functions on $T\mathcal O$ is not a Lie algebra homomorphism too.
However, using Theorem \ref{th.4}
we are able to  describe some subgroup
$\hat G\subset G$ and the corresponding orbit
$\hat {\mathcal{O}}=\hat G/\hat K$
for which the map $j^*|_{A^G}$ is a Lie algebra homomorphism (for any $t$), its image lies in the space $A^{\hat G}$ of $\hat G$-invariant functions on $T\hat{\mathcal O}$ and the
action of the group $\hat G/C(\hat G)$ on
$T\hat{\mathcal{O}}$ is locally free (see Proposition \ref{pr.6} below). Here
$C(\hat G)$ stands for the center of the Lie group
$\hat G$ (which is the kernel of the adjoint representation
of $\hat G$).

Let us describe the corresponding subgroups starting from the subgroup
$H\subset G$ determining the principal orbit type submanifold
$X_{(H)}$ of the $G$-manifold $X=T(G/K)$.
As we remarked above in this case the manifold
$X_{(H)}$ is a connected open dense subset of $X$. Denote by
${\mathfrak{k}}$ the Lie algebra of $K$ and by
${\mathfrak{m}}$ the orthogonal complement to ${\mathfrak{k}}$ in
${\mathfrak{g}}$ with respect to the form
$\langle\cdot,\cdot\rangle$. Taking into account that
$G$ acts on the base ${\mathcal{O}}\subset {\mathfrak{g}}$
transitively and identifying the tangent space
$T_o{\mathcal{O}}$ at $o=K$ with the space
${\mathfrak{m}}$, we obtain that
\begin{equation}\label{myeq}
H=\{k\in K: \operatorname{Ad}(k)(x_0)=x_0\}=K_{x_0}
\end{equation}
for some $x_0\in{\mathfrak{m}}$ such that the centralizer
${\mathfrak{k}}_{x_0}{\stackrel{\mathrm{def}}{=}}
\{y\in{\mathfrak{k}}: [x_0,y]=0\}$
has the minimal possible dimension. It is clear that the Lie algebra
$\mathfrak{h}$ of $H$ coincide with the Lie algebra
${\mathfrak{k}}_{x_0}$. Consider the compact Lie subalgebra
\begin{equation}\label{eq.15}
\hat{\mathfrak{g}}=\{y\in{\mathfrak{g}}:
[y,z]=0,\ \forall z\in\mathfrak{h}={\mathfrak{k}}_{x_0}\}
\end{equation}
of ${\mathfrak g}$.
Denote by
$\hat G$ the connected Lie subgroup of
$G$ with the Lie algebra $\hat{\mathfrak{g}}$.
The Lie group $\hat G$ is closed in $G$ because
$\hat G$ is the identity component of the centralizer of
$H^0$ in $G$. Moreover, $a$ is an element of
$\hat{\mathfrak g}$ because by definition
$[a,{\mathfrak k}]=0$ and
${\mathfrak h}\subset{\mathfrak k}$.
Thus, as above, we can consider $\operatorname{Ad}(\hat G)$-suborbit
$\hat{\mathcal{O}}\subset\hat{\mathfrak g}$ of
the orbit ${\mathcal{O}}$ through the element $a$ and
the natural embedding $j: T\hat{\mathcal{O}}\to T{\mathcal{O}}$.
\begin{proposition}\label{pr.6}
Let $\hat G$ be the connected Lie subgroup of
$G$ with the Lie algebra
$\hat{\mathfrak g}$ defined by~$(\ref{eq.15})$. Then
\begin{list}{}{\listparindent 0pt \itemsep 0pt
\parsep  0pt \topsep 0pt}
\item[(1)]
the restrictions
$\tilde\omega_1=\omega_1|_{T \hat{\mathcal{O}}}=j^*\omega_1$
and $\tilde\omega_2=\omega_2|_{T \hat{\mathcal{O}}}=j^*\omega_2$
are symplectic forms on the tangent bundle
$T \hat{\mathcal{O}}\subset T {\mathcal{O}}$;
\item[(2)]
the Poisson structures
$\tilde\eta_1=\tilde\omega_1^{-1}$ and
$\tilde\eta_2=\tilde\omega_2^{-1}$ determine a
$\hat G$-invariant bi-Poisson structure
$\{\tilde\eta^t=t_1\tilde\eta_1+t_1\tilde\eta_1\}$,
$(t_1,t_2)\in{\mathbb R}^2$, on
$T \hat{\mathcal{O}}$;
\item[(3)]
for any $t\in{\mathbb R}$ the map $j^*$
is a Poisson map of the $\eta^t$-Poisson algebra
$A^{G}$ of the $G$-invariant functions on $T {\mathcal{O}}$
into the $\tilde\eta^t$-Poisson algebra $A^{\hat G}$
of the $\hat G$-invariant function on $T\hat {\mathcal{O}}$;
\item[(4)]
the action of the Lie group $\hat G/C(\hat G)$
on $T\hat {\mathcal{O}}$ is locally free
($C(\hat G)$ is the center of $\hat G$);
\item[(5)]
the map $j^*:A^{G}\to A^{\hat G}$ is an injection
and the image $j^*(A^{G})$ functionally generates
the space $A^{\hat G}$.
\end{list}
\end{proposition}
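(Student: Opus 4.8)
The plan is to deduce the Proposition from the general machinery of Theorem~\ref{th.4}, using the explicit description of the principal isotropy data that precedes the statement. Parts (1) and (2) are immediate: the subgroup $\hat G$ is connected, closed (being the identity component of the centralizer of $H^0$), and its Lie algebra $\hat{\mathfrak g}$ contains $a$, so Lemma~\ref{le.5} applies verbatim and yields the $\hat G$-invariant bi-Poisson structure $\{\tilde\eta^t\}$ on $T\hat{\mathcal O}$. The content is therefore in parts (3)--(5), and the guiding idea is that $T\hat{\mathcal O}$ with its $\hat G$-action is a \emph{second} instance of the setting of Theorem~\ref{th.4}, whose associated regular submanifold coincides with the submanifold $X^r_{\mathfrak h}\subset X=T\mathcal O$ attached to the original $G$-action, with the same restricted forms.

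I would first carry out the Lie-theoretic computation behind part (4). Writing $\mathfrak h=\mathfrak k_{x_0}=Z_{\mathfrak g}(a)\cap Z_{\mathfrak g}(x_0)$ and $\hat{\mathfrak g}=Z_{\mathfrak g}(\mathfrak h)$, I would check that $Z(\hat{\mathfrak g})=\mathfrak h\cap\hat{\mathfrak g}$ equals the Lie algebra of $C(\hat G)$, using $a,x_0\in\hat{\mathfrak g}$. The key input is that $\mathfrak h$ is abelian (toral): the minimality of $\dim\mathfrak k_{x_0}$ forces the simultaneous centralizer of $a$ and $x_0$ into a maximal torus, whence $Z(\hat{\mathfrak g})=\mathfrak h$, $\mathfrak n(\mathfrak h)=\hat{\mathfrak g}$ and $C(\hat G)^0=H^0$. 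Since $(a,x_0)\in T\hat{\mathcal O}$ (indeed $x_0\in\hat{\mathfrak g}\cap\mathfrak k^\perp=T_a\hat{\mathcal O}$), its $\hat G$-isotropy is $Z_{\hat{\mathfrak g}}(a)\cap Z_{\hat{\mathfrak g}}(x_0)=\mathfrak h\cap\hat{\mathfrak g}=Z(\hat{\mathfrak g})=\operatorname{Lie}C(\hat G)$. As $C(\hat G)$ acts trivially on $T\hat{\mathcal O}$, this is the minimal isotropy, so $(a,x_0)$ is a principal point and, by Lemma~\ref{le.1} applied to the $\hat G$-action on $T\hat{\mathcal O}$, the group $\hat G/C(\hat G)$ acts locally freely on the corresponding principal submanifold, giving part (4).

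Next I would establish the identification $X^r_{\mathfrak h}=(T\hat{\mathcal O})^r_{\hat{\mathfrak h}}$ with $\hat{\mathfrak h}=Z(\hat{\mathfrak g})$. For $(p,v)$ with $G$-isotropy algebra exactly $\mathfrak h$ one has $\mathfrak h\subset\mathfrak g_p=Z_{\mathfrak g}(p)$, hence $p\in Z_{\mathfrak g}(\mathfrak h)=\hat{\mathfrak g}$, and a suborbit argument (that $\mathcal O\cap\hat{\mathfrak g}$ reduces to $\hat{\mathcal O}$ and $T_p\mathcal O\cap\hat{\mathfrak g}=T_p\hat{\mathcal O}$ on the regular locus) places $(p,v)\in T\hat{\mathcal O}$; there the $\hat G$-isotropy algebra is $\mathfrak g_{(p,v)}\cap\hat{\mathfrak g}=\mathfrak h\cap\hat{\mathfrak g}=Z(\hat{\mathfrak g})$, and conversely the toral input upgrades $\hat G$-isotropy $Z(\hat{\mathfrak g})$ to $G$-isotropy $\mathfrak h$. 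Granting this, since $X^r_{\mathfrak h}\subset T\hat{\mathcal O}\subset T\mathcal O$, the embedding factors as $i=j\circ\iota$, so $i^*(t_1\omega_1+t_2\omega_2)=\iota^*\tilde\omega^t$ and the reduced structures produced by Theorem~\ref{th.4} for the two actions agree on $X^r_{\mathfrak h}$. For part (3) I would take $f_1,f_2\in A^G$ and apply~(\ref{eq.13}) on both sides: using $\iota^*j^*=i^*$ one gets $\{j^*f_1,j^*f_2\}_{\tilde\eta^t}=\{f_1,f_2\}_{\eta^t}$ on $X^r_{\mathfrak h}$; both functions are $\hat G$-invariant (restriction turns $G$-invariance into $\hat G$-invariance), and $X^r_{\mathfrak h}$ meets every $\hat G$-orbit of the dense principal stratum, so they agree on all of $T\hat{\mathcal O}$ by continuity. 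Part (5) is analogous: $j^*|_{A^G}$ is injective because a $G$-invariant function vanishing on $X^r_{\mathfrak h}$ vanishes on all of $X_{(H)}$ (every $G$-orbit meets $X^r_{\mathfrak h}$, Lemma~\ref{le.1}), and since $X^r_{\mathfrak h}/(N(H^0)/H^0)\simeq X_{(H)}/G$ is the common quotient of the two reductions, $j^*(A^G)$ consists exactly of the functions pulled back from this quotient, which functionally generate $A^{\hat G}$ on the dense principal stratum.

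The main obstacle is the identification in the third paragraph: verifying that the abstract regular submanifold $X^r_{\mathfrak h}$ of the $G$-action is genuinely the $\hat G$-principal stratum of $T\hat{\mathcal O}$, with matching structure group. This rests on the purely Lie-theoretic facts that $\mathfrak h=\mathfrak k_{x_0}$ is abelian and that $\hat{\mathfrak g}=Z_{\mathfrak g}(\mathfrak h)$ satisfies $Z(\hat{\mathfrak g})=\mathfrak h$ and $\mathfrak n(\mathfrak h)=\hat{\mathfrak g}$ (so that $N(H^0)/H^0$ and $\hat G/C(\hat G)$ have the same Lie algebra), together with the suborbit statement $\hat{\mathcal O}\subset\mathcal O\cap\hat{\mathfrak g}$ controlling which base points occur. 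Once these are in place, parts (3)--(5) are formal consequences of applying Theorem~\ref{th.4} to the two actions and comparing the reductions on the shared submanifold $X^r_{\mathfrak h}$.
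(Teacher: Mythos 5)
Your plan for parts (1)--(2) matches the paper (both invoke Lemma~\ref{le.5} verbatim), but the engine of your parts (3)--(5) is a Lie-theoretic claim that is false in general: $\mathfrak h=\mathfrak k_{x_0}$, the principal isotropy algebra of the isotropy representation, need \emph{not} be abelian. Concretely, take $G=SU(4)$, $a=i\,\operatorname{diag}(3,-1,-1,-1)$, so $K=S(U(1)\times U(3))$ and $\mathcal O\cong\mathbb{CP}^3$; then $\mathfrak m\cong\mathbb C^3$ with $K$ acting through $U(3)$ twisted by a character, and the stabilizer algebra of any $x_0\neq 0$ is isomorphic to $\mathfrak u(2)$. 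Here $\mathfrak h\not\subset\hat{\mathfrak g}$, $Z(\hat{\mathfrak g})=\hat{\mathfrak g}\cap\mathfrak h=Z(\mathfrak h)\subsetneq\mathfrak h$ (dimension $1$ versus $4$), $C(\hat G)^0\neq H^0$, and $\mathfrak n(\mathfrak h)\neq\hat{\mathfrak g}$: the correct identity, which the paper proves and uses, is $\mathfrak n(\mathfrak h)=\hat{\mathfrak g}+\mathfrak h$ (valid because the compact algebra $\mathfrak h$ is reductive), whence $(N(H^0))^0=\hat G\cdot H^0$. So your "toral input" that is supposed to upgrade $\hat G$-isotropy $Z(\hat{\mathfrak g})$ to $G$-isotropy $\mathfrak h$ and to identify the structure algebras via $\mathfrak n(\mathfrak h)=\hat{\mathfrak g}$ collapses. (A fragment of your part (4) does survive without torality: since $a,x_0\in\hat{\mathfrak g}$, one gets $Z(\hat{\mathfrak g})\subset Z_{\mathfrak g}(a)\cap Z_{\mathfrak g}(x_0)=\mathfrak h$, so the $\hat G$-isotropy algebra at $(a,x_0)$ is indeed $\hat{\mathfrak g}\cap\mathfrak h=Z(\hat{\mathfrak g})$; but this does not rescue the identification in your third paragraph.)

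The second genuine gap is the one you flag yourself but then grant: the claim that $X^r_{\mathfrak h}\subset T\hat{\mathcal O}$, resting on "$\mathcal O\cap\hat{\mathfrak g}$ reduces to $\hat{\mathcal O}$". In general $\mathcal O\cap\hat{\mathfrak g}$ is a union of several $\operatorname{Ad}(\hat G)$-suborbits permuted by the disconnected group $N(H^0)$, and since $X^r_{\mathfrak h}=N(H^0)\cdot(\mathfrak m(x_0)\cap X^r_{\mathfrak h})$ (the paper's~(\ref{eq.20})), components of $X^r_{\mathfrak h}$ may lie outside $T\hat{\mathcal O}$. The paper's proof is organized precisely to avoid this: it isolates the single connected component $X^{r,x_0}_{\mathfrak h}=(N(H^0))^0\cdot(\mathfrak m(x_0)\cap X^r_{\mathfrak h})$ with its stabilizer $(N(H^0))^{x_0}$, shows via the slice $\mathfrak m(x_0)$, identity~(\ref{eq.18}), $\hat{\mathfrak m}(x_0)=\mathfrak m(x_0)$ and Remark~\ref{re.7} applied to $(\hat G,\hat K)$ that $\hat G\cdot\mathfrak m(x_0)=T\hat{\mathcal O}$, hence that $X^{r,x_0}_{\mathfrak h}$ is open and dense in $T\hat{\mathcal O}$ (formulas~(\ref{eq.21})--(\ref{eq.23})), and only then deduces (3)--(5) from Theorem~\ref{th.4} by density. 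Finally, your sketch of (5) asserts that $j^*(A^G)$ "consists exactly" of pullbacks generating $A^{\hat G}$; the actual content there is the covering $X^{r,x_0}_{\mathfrak h}/\hat G\to X^{r,x_0}_{\mathfrak h}/(N(H^0))^{x_0}$ (each connected component of an $(N(H^0))^{x_0}$-orbit is a single $\hat G$-orbit), which is why one only gets functional generation rather than equality — a step your proposal omits. So while your overall scheme (restrict to the regular submanifold, compare via~(\ref{eq.13}), extend by invariance and density) parallels the paper, the proof as written rests on false Lie theory and an unproved suborbit identification and does not go through.
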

\begin{proof}
Items (1) and (2) follow immediately from Lemma~\ref{le.5}.
To prove items (3)--(5)
we will describe the submanifolds
$X_H$ and $X^r_\mathfrak{h}$ of $X_{(H)}$ defined by
relations~(\ref{eq.2}),~(\ref{eq.5}),~(\ref{eq.6})
and will show that some connected component of $X^r_\mathfrak{h}$
is open and dense in $T\hat{\mathcal{O}}$.
To this end  we will use some calculation from
the paper~\cite[sect. 2.1, 3.3]{MP}.

It is clear that
$\hat{\mathcal{O}}=\hat G/\hat K$, where
$\hat K=\hat G\cap K$.
Since the form
$\langle\cdot ,\cdot \rangle$ is
$\operatorname{Ad}(G)$-invariant, we have
$[{\mathfrak{k}},{\mathfrak{m}}]\subset{\mathfrak{m}}$ and
$\operatorname{ad}(x_0)({\mathfrak{k}})\subset{\mathfrak{m}}$, where $x_0$ is that mentioned in formula (\ref{myeq}).
Let
$$
{\mathfrak{m}}(x_0)=\{y\in{\mathfrak{m}}:
\langle y,\operatorname{ad}(x_0)({\mathfrak{k}}) \rangle=0\}.
$$
By the $\operatorname{Ad}(G)$-invariance of
$\langle\cdot ,\cdot\rangle$, we have that
$x_0\in{\mathfrak{m}}(x_0)$.  The Lie group $K$ is
compact, hence by Remark \ref{re.7} below,
\begin{equation}\label{eq.16}
\operatorname{Ad}(K)({\mathfrak{m}}(x_0))={\mathfrak{m}}
\quad\text{and, consequently,}\quad
G\cdot ({\mathfrak{m}}(x_0))=T(G/K),
\end{equation}
i.e. each $G$-orbit in $T(G/K)$ intersects the linear subspace
${\mathfrak{m}}(x_0)\subset {\mathfrak{m}}=T_o(G/K)$.

\begin{remark}\label{re.7}
Relations~(\ref{eq.16}) hold if
$x_0$ is replaced by an arbitrary element
$x\in{\mathfrak{m}}$. This follows easily from the fact that
for any $y\in{\mathfrak{m}}$ the function
$k\mapsto \langle y ,\operatorname{Ad}(k)(x)\rangle$
on the compact group $K$ attains its maximum value at some point
$k_y\in K$. Differentiating
$\langle y ,\operatorname{Ad}(k_y\exp t\xi)(x)\rangle$ with
$\xi\in{\mathfrak{k}}$, we obtain that
$\operatorname{Ad}(k_y^{-1})(y)\bot\operatorname{ad}(x)({\mathfrak{k}})$.
\end{remark}

Consider the
$\operatorname{Ad}(K)$-action of the compact Lie group
$K$ on ${\mathfrak{m}}$. The space
${\mathfrak{m}}(x_0)$ is the orthogonal complement to the
tangent space
$T_{x_0}(\operatorname{Ad}(K)(x_0))=
\operatorname{ad}(x_0)({\mathfrak{k}})$
of the orbit $\operatorname{Ad}(K)(x_0)\subset{\mathfrak{m}}$ at
$x_0$ in ${\mathfrak{m}}$~\cite[Th.2.3.28]{OR}. Hence some open neighborhood of
$x_0$ in the linear space
${\mathfrak{m}}(x_0)\ni x_0$ is a slice for
$\operatorname{Ad}(K)$-action at
$x_0$. Since the group $H=K_{x_0}$
represents the principal orbit type, the action of
$H$ on this open neighborhood of
$x_0$ and, consequently, on the whole linear space
${\mathfrak{m}}(x_0)$ is trivial, i.e.
\begin{equation}\label{eq.17}
\operatorname{Ad}(h)(x)=x
\quad\text{for all}\quad h\in H\ \text{and}\ x\in{\mathfrak{m}}(x_0),
\end{equation}
and, consequently,
\begin{equation}\label{eq.18}
[{\mathfrak{m}}(x_0),\mathfrak{h}]=0
\end{equation}
(see~\cite[Prop. 9]{My01} for another proof of identity~(\ref{eq.18})).
It is clear that
${\mathfrak{m}}(x_0)\cap X_H$ is an open dense subset of
${\mathfrak{m}}(x_0)$. Let $X^r_\mathfrak{h}$
be the submanifold of the connected manifold
$X_{(H)}$ defined by relations~(\ref{eq.5}) and~(\ref{eq.6}).
From~(\ref{eq.17}) and
the definitions of the manifolds
$X_{(H)}$, $X_{H}$ and
$X^r_\mathfrak{h}$ it follows easily that
\begin{equation}\label{eq.19}
{\mathfrak{m}}(x_0)\cap X_H
={\mathfrak{m}}(x_0)\cap X_{(H)}
={\mathfrak{m}}(x_0)\cap X^r_\mathfrak{h}.
\end{equation}

Let us show that
\begin{equation}\label{eq.20}
X_H=N(H)\cdot ({\mathfrak{m}}(x_0)\cap X_H)
\quad\text{and}\quad
X^r_\mathfrak{h}=N(H^0)\cdot ({\mathfrak{m}}(x_0)\cap X^r_\mathfrak{h}),
\end{equation}
where $N(H)$ is the normalizer of $N$ in $G$
and  $N(H^0)$ is the normalizer of the identity
component $H^0$ of $H$ in $G$.
Indeed, by~(\ref{eq.16}) each point of the manifold
$X_H$ has the form $g\cdot x$ for some $g\in G$,
$x\in {\mathfrak{m}}(x_0)$ and for this point $G_{g\cdot x}=H$.
However, $G_{g\cdot x}=gG_x g^{-1}$ and
by~(\ref{eq.17}) $H\subset G_x$. Therefore
$gHg^{-1}\subset H$. Since $gHg^{-1}$ is an open
subgroup of $H$ and the compact group $H$ has
a finite number of connected component,
$gHg^{-1}=H$, i.e. $g\in N(H)$.
Similarly, each point of the manifold
$X^r_\mathfrak{h}$ has the form $g\cdot x$ for some $g\in G$,
$x\in {\mathfrak{m}}(x_0)$ and for this point $(G_{g\cdot x})^0=H^0$.
Also $G_{g\cdot x}=gG_x g^{-1}$ and
by~(\ref{eq.17}) $H\subset G_x$. Then $H^0=gG_x^0 g^{-1}\subset G_x^0$,
and, consequently, $H^0=gG_x^0 g^{-1}=G_x^0$. Thus
$gH^0g^{-1}=H^0$, i.e. $g\in N(H^0)$.

Note that the subgroup $N(H^0)$ of $G$ is closed (compact) and
therefore contains only a finite number of connected
components, i.e. $|N(H^0)/(N(H^0))^0|<\infty$.
Since by Lemma~\ref{le.1} $X^r_\mathfrak{h}$ is an embedded
submanifold of $X_{(H)}$ and of $X=T(G/K)$, its connected
component $X^{r,x_0}_\mathfrak{h}$ containing $x_0$ has the form
\begin{equation}\label{eq.21}
X^{r,x_0}_\mathfrak{h}=(N(H^0))^0\cdot
({\mathfrak{m}}(x_0)\cap X^r_\mathfrak{h})
\end{equation}
 and (see~(\ref{eq.8}))
\begin{equation}\label{eq.22}
\mathbf X=X_{(H)}/G\simeq X^r_\mathfrak{h}/(N(H^0)/H^0)
\simeq X^{r,x_0}_\mathfrak{h}/((N(H^0))^{x_0}/H^0),
\end{equation}
where $((N(H^0))^{x_0}$ is the normalizer
of the component $X^{r,x_0}_\mathfrak{h}$ in the
group $N(H^0)$ (containing the connected
component $((N(H^0))^0$ of $N(H^0)$).
Since by definition $H\subset N(H^0)$, then
$h\cdot (N(H^0))^0\cdot h^{-1}=(N(H^0))^0$ for any $h\in H$.
Taking into account that $\operatorname{Ad}(H)(x)=x$
for each $x\in\mathfrak{m}(x_0)$, we obtain that
$$
H\subset (N(H^0))^{x_0}.
$$
Now it is clear that the manifold
$X^{r,x_0}_\mathfrak{h}$ is a single orbit type
$(N(H^0))^{x_0}/H^0$-manifold with a discrete
isotropy group isomorphic to $H/H^0$ (the group $H^0$
acts trivially on $X^{r,x_0}_\mathfrak{h}$).

We will show that the connected component
$X^{r,x_0}_\mathfrak{h}$ of the manifold
$X^r_\mathfrak{h}$ containing the element
$x_0$ is an open dense subset of
$T\hat {\mathcal{O}}$. To this end
consider  the subalgebra
$\hat{\mathfrak{k}}={\mathfrak k}\cap \hat{\mathfrak{g}}$
of ${\mathfrak k}$.
Since ${\mathfrak{k}}$ is the centralizer of $a\in{\mathfrak{g}}$
in ${\mathfrak{g}}$, the element
$a\in{\mathfrak{k}}$ belongs to $\hat{\mathfrak{k}}$
($[a,{\mathfrak{k}}_{x_0}]=[a,{\mathfrak{k}}]=0$).
Denote by
$\hat{\mathfrak{m}}$ the orthogonal complement to
$\hat{\mathfrak{k}}$ in
$\hat{\mathfrak{g}}$ with respect to the form
$\langle\cdot,\cdot\rangle|_{\hat {\mathfrak{g}}}$.
By~(\ref{eq.18})
${\mathfrak{m}}(x_0)\subset\hat{\mathfrak{m}}$. Moreover,
${\mathfrak{m}}(x_0)$ is the orthogonal complement of the
space $\operatorname{ad}(x_0)(\hat{\mathfrak{k}})$ in
$\hat{\mathfrak{m}}$~\cite[Prop.
2.3]{MP}, i.e. $\hat{\mathfrak{m}}(x_0)={\mathfrak{m}}(x_0)$.
Now applying Remark~\ref{re.7} to the pair
$(\hat G,\hat K)$ we get
$\hat G\cdot {\mathfrak{m}}(x_0)=T\hat{\mathcal{O}}$.

Since the compact Lie algebra
$\mathfrak{h}$ is reductive, we have that
$\mathfrak{n}(\mathfrak{h})=\hat{\mathfrak{g}}+\mathfrak{h}$
for the normalizer
$\mathfrak{n}(\mathfrak{h})$ of $\mathfrak{h}$ in
${\mathfrak{g}}$. Then $gh=hg$ for all elements
$g\in\hat G$ and $h\in H^0$ because
$\hat G$ is a connected component of the centralizer of
$H^0$ in $G$.
It is clear that $\hat G\cdot H^0\subset G$ is the identity component of the
normalizer $N(H^0)$. However,
$H^0\cdot {\mathfrak{m}}(x_0)={\mathfrak{m}}(x_0)$
by~(\ref{eq.17}), and therefore
\begin{equation}\label{eq.23}
\begin{split}
T\hat{\mathcal{O}}
&=\hat G\cdot {\mathfrak{m}}(x_0)=
(\hat G\cdot H^0)\cdot {\mathfrak{m}}(x_0)\\
&=(N(H^0))^0\cdot {\mathfrak{m}}(x_0)
=(N(H^0))^{x_0}\cdot {\mathfrak{m}}(x_0).
\end{split}
\end{equation}
Since by~(\ref{eq.21})
$(N(H^0))^0\cdot ({\mathfrak{m}}(x_0)\cap X^r_\mathfrak{h})$
is the connected component
$X^{r,x_0}_\mathfrak{h}$ of the manifold $X^r_\mathfrak{h}$,
$X^{r,x_0}_\mathfrak{h}$ is an open dense subset of
$\hat X=T\hat {\mathcal{O}}$. This subset is
$\hat G$-invariant because $\hat G\subset (N(H^0))^0$. But
$X_{(H)}\subset T{\mathcal{O}}$ and
$X^{r,x_0}_\mathfrak{h}\subset T\hat{\mathcal{O}}$.
Thus by~(\ref{eq.22}) and Theorem~\ref{th.4} for any
$t\in{\mathbb R}^2$ the map
$i^*=(j|_{X^{r,x_0}_\mathfrak{h}})^*$ is a Poisson map of the
$\eta^t$-Poisson algebra of the
$G$-invariant function on $X_{(H)}$ into the
$\tilde\eta^t$-Poisson algebra of the
$(N(H^0))^{x_0}/H^0$-invariant function on
$X^{r,x_0}_\mathfrak{h}$.
Now to prove item (3) it is sufficient to remark
that  $X_{(H)}$ and $X^{r,x_0}_\mathfrak{h}$ are
open and dense in $T{\mathcal{O}}$ and
$T\hat{\mathcal{O}}$ respectively.

By Lemma~\ref{le.1} the actions of the groups
$(N(H^0))^{x_0}/H^0$ and $(N(H^0))^0/H^0$ on
$X^{r,x_0}_\mathfrak{h}$ are locally free. As we remarked
above $X^{r,x_0}_\mathfrak{h}$ is a single orbit type
$(N(H^0))^{x_0}/H^0$-manifold with a discrete isotropy group
isomorphic to $H/H^0$. Therefore by~(\ref{eq.23})
$X^{r,x_0}_\mathfrak{h}$ is also a single orbit type
$\hat G$-manifold with the isotropy group isomorphic to
$\hat H=\hat G\cap H$ and $\hat H$
is a Lie group determining the principal orbit type for the
$\hat G$-action on $T\hat{\mathcal{O}}$.
Taking into account that
$[\hat{\mathfrak g},{\mathfrak h}]=0$ by definition,
we obtain that the Lie algebra
$\hat{\mathfrak g}\cap{\mathfrak h}$
is a subalgebra of the center of
$\hat{\mathfrak g}$ and, consequently,
$\hat G\cap H^0\subset C(\hat G)$, where
$C(\hat G)$ is the kernel of the adjoint representation of
$\hat G$. Thus $\hat G\cap H^0\subset C(\hat G)\cap H$.
Therefore the action of the group
$\hat G/C(\hat G)$ on
$X^{r,x_0}_\mathfrak{h}\subset T\hat {\mathcal{O}}$
with a discrete isotropy group isomorphic to some quotient group of
$(\hat G\cap H)/(\hat G\cap H^0)$
is locally free, item (4) is proved.

Since $(N(H^0))^0=\hat G\cdot H^0$ and $H^0$ acts trivially on
$X^{r,x_0}_\mathfrak{h}$, each connected component of the
$(N(H^0))^{x_0}$-orbit in $X^{r,x_0}_\mathfrak{h}$ is some
$\hat G$-orbit and, consequently, the natural projection
$X^{r,x_0}_\mathfrak{h}/\hat G\to X^{r,x_0}_\mathfrak{h}/(N(H^0))^{x_0}$
is a covering. Taking into account that
$X_{(H)}/G\simeq X^{r,x_0}_\mathfrak{h}/(N(H^0))^{x_0}$
(see~(\ref{eq.22})) we complete the proof of (5).
\qed
\end{proof}

\section{Appendix}

The goal of this section is to provide the reader
with a proof of a statement that is well known to experts but
does not seem to be readily available in the literature.

Let $G$ be a connected Lie group and
$H^0$ be its compact connected subgroup. Denote by
${\mathfrak{g}}$ and
$\mathfrak{h}$ the Lie algebras of $G$ and
$H^0$ respectively. Let
$N(H^0)$ be the normalizer group of $H^0$ in
$G$. The Lie algebra of $N(H^0)$ is the normalizer
$\mathfrak{n}(\mathfrak{h})$ of the algebra
$\mathfrak{h}$ in ${\mathfrak{g}}$.

\begin{lemma}\label{le.8}
Let $\alpha$, $\beta$ be two
$\operatorname{Ad}(H^0)$-invariant
scalar products on the algebra Lie
${\mathfrak{g}}$. Let $\mathfrak{p}^\alpha$ and
$\mathfrak{p}^\beta$ be the orthogonal complements to
$\mathfrak{n}(\mathfrak{h})$ in
${\mathfrak{g}}$ with respect to the forms
$\alpha$ and $\beta$ respectively. Then
$\mathfrak{p}^\alpha\oplus\mathfrak{h}=\mathfrak{p}^\beta\oplus\mathfrak{h}$.
\end{lemma}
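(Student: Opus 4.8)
The plan is to diagonalize the situation using the representation theory of the compact group $H^0$. Since $H^0$ is compact and connected, the representation $\operatorname{Ad}(H^0)$ on $\mathfrak{g}$ is completely reducible, so I would first record the canonical splitting $\mathfrak{g}=\mathfrak{g}^{H^0}\oplus\mathfrak{m}$ into the fixed-point subspace $\mathfrak{g}^{H^0}=\{\xi:\operatorname{Ad}(h)\xi=\xi\ \forall h\in H^0\}$ and the sum $\mathfrak{m}$ of the nontrivial isotypic components. Two facts about this splitting are central. First, because $H^0$ is connected, $\mathfrak{g}^{H^0}$ equals the centralizer $\mathfrak{z}_{\mathfrak{g}}(\mathfrak{h})=\{\xi:[\xi,\mathfrak{h}]=0\}$. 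Second, distinct isotypic components are orthogonal with respect to every $\operatorname{Ad}(H^0)$-invariant scalar product (by the standard averaging argument: for $v$ fixed and $w$ in a nontrivial component, $\langle v,w\rangle=\langle v,\int_{H^0}\operatorname{Ad}(h)w\,dh\rangle=0$), so the splitting $\mathfrak{g}=\mathfrak{g}^{H^0}\oplus\mathfrak{m}$ is simultaneously orthogonal for both $\alpha$ and $\beta$.

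Next I would locate $\mathfrak{n}(\mathfrak{h})$ and $\mathfrak{h}$ relative to this splitting. Since a centralizing element certainly normalizes, $\mathfrak{g}^{H^0}=\mathfrak{z}_{\mathfrak{g}}(\mathfrak{h})\subseteq\mathfrak{n}(\mathfrak{h})$, and as $\mathfrak{n}(\mathfrak{h})$ is $\operatorname{Ad}(H^0)$-invariant this yields $\mathfrak{n}(\mathfrak{h})=\mathfrak{g}^{H^0}\oplus(\mathfrak{n}(\mathfrak{h})\cap\mathfrak{m})$. Because $\mathfrak{h}$ is of compact type it is reductive, $\mathfrak{h}=\mathfrak{z}(\mathfrak{h})\oplus[\mathfrak{h},\mathfrak{h}]$, with $\mathfrak{z}(\mathfrak{h})\subseteq\mathfrak{g}^{H^0}$ and $[\mathfrak{h},\mathfrak{h}]\subseteq\mathfrak{m}$ (a nonzero $H^0$-fixed vector of $[\mathfrak{h},\mathfrak{h}]$ would lie in $\mathfrak{z}(\mathfrak{h})\cap[\mathfrak{h},\mathfrak{h}]=0$). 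I would then pin down the crucial equality $\mathfrak{n}(\mathfrak{h})\cap\mathfrak{m}=[\mathfrak{h},\mathfrak{h}]$: choosing an $\operatorname{Ad}(H^0)$-invariant complement $\mathfrak{q}$ to $\mathfrak{h}$ in $\mathfrak{n}(\mathfrak{h})$, the normalizer condition gives $[\mathfrak{h},\mathfrak{q}]\subseteq\mathfrak{h}\cap\mathfrak{q}=0$, so $\mathfrak{q}\subseteq\mathfrak{z}_{\mathfrak{g}}(\mathfrak{h})=\mathfrak{g}^{H^0}$; hence the only part of $\mathfrak{n}(\mathfrak{h})=\mathfrak{z}(\mathfrak{h})\oplus[\mathfrak{h},\mathfrak{h}]\oplus\mathfrak{q}$ lying in $\mathfrak{m}$ is $[\mathfrak{h},\mathfrak{h}]$.

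With these in hand the conclusion is immediate and manifestly independent of the chosen product. Since $\mathfrak{g}^{H^0}\subseteq\mathfrak{n}(\mathfrak{h})$ and the splitting is $\alpha$-orthogonal, the $\alpha$-orthogonal complement $\mathfrak{p}^\alpha$ is contained in $\mathfrak{m}$ and is precisely the $\alpha$-orthogonal complement of $\mathfrak{n}(\mathfrak{h})\cap\mathfrak{m}=[\mathfrak{h},\mathfrak{h}]$ inside $\mathfrak{m}$; therefore $\mathfrak{p}^\alpha\oplus[\mathfrak{h},\mathfrak{h}]=\mathfrak{m}$. Consequently $\mathfrak{p}^\alpha\oplus\mathfrak{h}=\mathfrak{z}(\mathfrak{h})\oplus(\mathfrak{p}^\alpha\oplus[\mathfrak{h},\mathfrak{h}])=\mathfrak{z}(\mathfrak{h})\oplus\mathfrak{m}$, and the right-hand side does not involve $\alpha$. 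Running the identical argument for $\beta$ gives $\mathfrak{p}^\beta\oplus\mathfrak{h}=\mathfrak{z}(\mathfrak{h})\oplus\mathfrak{m}$ as well, whence $\mathfrak{p}^\alpha\oplus\mathfrak{h}=\mathfrak{p}^\beta\oplus\mathfrak{h}$.

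I expect the one genuinely load-bearing step to be the identification $\mathfrak{n}(\mathfrak{h})\cap\mathfrak{m}=[\mathfrak{h},\mathfrak{h}]$, i.e. that the part of the normalizer outside the fixed-point space is exactly the derived algebra of $\mathfrak{h}$; everything else is bookkeeping with the isotypic decomposition. This step relies essentially on the reductivity of the compact algebra $\mathfrak{h}$ and on the elementary observation that an $\operatorname{Ad}(H^0)$-invariant complement to $\mathfrak{h}$ in $\mathfrak{n}(\mathfrak{h})$ must centralize $\mathfrak{h}$ and therefore fall into the fixed-point space $\mathfrak{g}^{H^0}$.
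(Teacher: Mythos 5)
Your proof is correct, and it takes a genuinely different route from the paper's. The paper works downstairs in the quotient $\mathfrak{g}/\mathfrak{h}$: it characterizes $\pi(\mathfrak{n}(\mathfrak{h}))=\mathfrak{n}(\mathfrak{h})/\mathfrak{h}$ as exactly the set of $\operatorname{Ad}(H^0)$-fixed vectors in $\mathfrak{g}/\mathfrak{h}$ (formula~(\ref{eq.25})), introduces the nondegenerate intertwiner $J$ with $\beta'(u,v)=\alpha'(u,Jv)$, notes that $J$ preserves the fixed subspace since it commutes with the action, and concludes $\beta'(\pi(\mathfrak{p}^\alpha),\pi(\mathfrak{n}(\mathfrak{h})))=0$, hence $\pi(\mathfrak{p}^\alpha)=\pi(\mathfrak{p}^\beta)$ --- the same symmetric-intertwiner device as in the proof of Lemma~\ref{le.2}, and it requires no structure theory of $\mathfrak{h}$. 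You instead stay upstairs in $\mathfrak{g}$, split it into the fixed part $\mathfrak{g}^{H^0}=\mathfrak{z}_{\mathfrak{g}}(\mathfrak{h})$ and the canonical complement $\mathfrak{m}$, and reduce everything to the identity $\mathfrak{n}(\mathfrak{h})\cap\mathfrak{m}=[\mathfrak{h},\mathfrak{h}]$, which you correctly derive from the reductivity of the compact algebra $\mathfrak{h}$ together with the observation that an invariant complement to $\mathfrak{h}$ in $\mathfrak{n}(\mathfrak{h})$ must centralize $\mathfrak{h}$; in effect you prove $\mathfrak{n}(\mathfrak{h})=\mathfrak{h}+\mathfrak{z}_{\mathfrak{g}}(\mathfrak{h})$, a fact the paper invokes separately, with the same reductivity justification, in the proof of Proposition~\ref{pr.6}. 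What each approach buys: the paper's argument is shorter and uniform with Lemma~\ref{le.2}, since the one trick ($J$ preserves fixed vectors) serves both statements; yours yields a stronger conclusion, namely the explicit, manifestly canonical identification $\mathfrak{p}^\alpha\oplus\mathfrak{h}=\mathfrak{z}(\mathfrak{h})\oplus\mathfrak{m}$ rather than bare independence of the scalar product. Both proofs ultimately rest on the same germ --- under any invariant scalar product the fixed subspace is orthogonal to the sum of the nontrivial isotypic components --- which the paper establishes via Schur-type commutation with $J$ and you establish by averaging. One cosmetic remark: your parenthetical assertion that \emph{all} distinct isotypic components are mutually orthogonal for every invariant product is true, but your averaging computation only proves orthogonality of the trivial component to $\mathfrak{m}$; since that is the only instance your argument uses, nothing is lost, but the claim should be stated no more broadly than it is proved.
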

\begin{proof}
The forms $\alpha,\beta$ determine the
$\operatorname{Ad}(H^0)$-invariant
scalar products on the quotient space
${\mathfrak{g}}/\mathfrak{h}$ which we denote by
$\alpha'$ and $\beta'$ respectively. Let
$\pi:{\mathfrak{g}}\to{\mathfrak{g}}/\mathfrak{h}$
be the natural projection. By definition, the spaces
$\pi(\mathfrak{p}^\alpha)$ and
$\pi(\mathfrak{p}^\beta)$ are the orthogonal complements to
the space $\pi(\mathfrak{n}(\mathfrak{h}))$ in
${\mathfrak{g}}/\mathfrak{h}$ with respect to the forms
$\alpha'$ and
$\beta'$ respectively. Since the scalar product
$\alpha'$ on ${\mathfrak{g}}/\mathfrak{h}$ is
$\operatorname{Ad}(H^0)$-invariant,
there exists a unique nondegenerate linear map
$J:{\mathfrak{g}}/\mathfrak{h}\to{\mathfrak{g}}/\mathfrak{h}$
such that $\beta'(u,v)=\alpha'(u,Jv)$ for all
$u,v\in {\mathfrak{g}}/\mathfrak{h}$ and
$J\cdot\operatorname{Ad}(h)=\operatorname{Ad}(h)\cdot J$
for all $h\in H^0$.

If $\xi\in\mathfrak{n}(\mathfrak{h})$ then
$\operatorname{Ad}(h)(\xi)-\xi\in\mathfrak{h}$ for any
$h\in H^0$. This follows from the fact that
$h\exp(t\xi)h^{-1}\exp(-t\xi)\in H^0$
($H^0$ is a normal subgroup of $N(H^0)$). Conversely, if
$\xi\in{\mathfrak{g}}$ and
$\operatorname{Ad}(h)(\xi)-\xi\in\mathfrak{h}$ for any
$h\in H^0$ then
$[\xi,\mathfrak{h}]\subset\mathfrak{h}$, i.e.
$\xi\in\mathfrak{n}(\mathfrak{h})$. In other words,
\begin{equation}\label{eq.24}
\mathfrak{n}(\mathfrak{h})=\{\xi\in {\mathfrak{g}}:
\operatorname{Ad}(h)(\xi)-\xi\in\mathfrak{h},\ \forall h\in H^0\}
\end{equation}
and
\begin{equation}\label{eq.25}
\pi(\mathfrak{n}(\mathfrak{h}))=
\mathfrak{n}(\mathfrak{h})/\mathfrak{h}
=\{v\in {\mathfrak{g}}/\mathfrak{h}:
\operatorname{Ad}(h)(v)=v,\ \forall h\in H^0\}.
\end{equation}
Now we get the inclusion
$J(\pi(\mathfrak{n}(\mathfrak{h})))\subset
\pi(\mathfrak{n}(\mathfrak{h}))$
due to the fact that $J$ commutes with the
$\operatorname{Ad}(H^0)$-action on
${\mathfrak{g}}/\mathfrak{h}$ and
$\pi(\mathfrak{n}(\mathfrak{h}))\subset {\mathfrak{g}}/\mathfrak{h}$
is the set of all $\operatorname{Ad}(H^0)$-fixed vectors in
${\mathfrak{g}}/\mathfrak{h}$. Therefore
$$
\beta'(\pi(\mathfrak{p}^\alpha),\pi(\mathfrak{n}(\mathfrak{h})))
=\alpha'(\pi(\mathfrak{p}^\alpha),J(\pi(\mathfrak{n}(\mathfrak{h})))
=\alpha'(\pi(\mathfrak{p}^\alpha),\pi(\mathfrak{n}(\mathfrak{h})))=0,
$$
and, consequently,
$\pi(\mathfrak{p}^\alpha)=\pi(\mathfrak{p}^\beta)$, i.e.
$\mathfrak{p}^\alpha\oplus\mathfrak{h}
=\mathfrak{p}^\beta\oplus\mathfrak{h}$.
\qed
\end{proof}

\end{document}